\definecolor{DarkBlue}{rgb}{0,0.2,0.6}
\definecolor{PinkPurple}{rgb}{0.8,0.3,0.3}
\newtheorem{thm}{Theorem}[section]
\newtheorem{lemma}[thm]{Lemma}
\newtheorem{prop}[thm]{Proposition}
\newtheorem{crl}[thm]{Corollary}
\theoremstyle{definition}
\newtheorem{dfn}[thm]{Definition}
\newtheorem{rem}[thm]{Remark}
\newcommand{\reals}{\mathbb{R}}
\newcommand{\rx}{\sgr{\mathbb{R}}{\ux}}
\newcommand{\sos}{\sum\mathbb{R}[\underline{X}]^2}
\newcommand{\ux}{\underline{X}}
\newcommand{\K}[1]{\mathcal{K}_{#1}}
\newcommand{\pos}[1]{\mbox{Psd}(#1)}
\newcommand{\arch}[1]{\mbox{Arch}(#1)}
\newcommand{\X}[1]{\mathcal{X}(#1)}
\newcommand{\T}[1]{\mathcal{T}_{#1}}
\newcommand{\sgr}[2]{#1[#2]}
\newcommand{\norm}[2]{\|\ifthenelse{\isempty{#2}}{\cdot}{#2}\|_{#1}}
\newcommand{\cl}[2]{\overline{#2}^{\ifthenelse{\isempty{#1}}{}{#1}}}
\newcommand{\Cnt}[2]{\mathrm{C}_{\ifthenelse{\isempty{#1}}{}{#1}}(#2)}
\newcommand{\Psd}[2]{\mbox{Psd}_{\ifthenelse{\isempty{#1}}{}{#1}}(#2)}
\newcommand{\Bnd}[2]{\mbox{B}_{\ifthenelse{\isempty{#1}}{}{#1}}(#2)}
\newcommand{\Sp}[2]{\mathfrak{sp}_{\ifthenelse{\isempty{#1}}{}{#1}}(#2)}
\newcommand{\map}[3]{#1:#2\longrightarrow #3}
\begin{document}

\title[On the topologies induced by a cone]{On the topologies induced by a cone}
\author[M. Ghasemi]{Mehdi Ghasemi}
\address{$^2$School of Physical \& Mathematical Sciences,\newline\indent
Nanyang Technological University,\newline\indent
21 Nanyang Link, 637371, Singapore}
\email{mghasemi@ntu.edu.sg}
\keywords{topology, continuous functions, convexity,
linear functional, measure}
\subjclass[2010]{Primary 47A57, 28C05, 28E99; 
Secondary 44A60, 46J25.}
\date{\today}
\begin{abstract}
Let $A$ be a commutative and unital $\reals$-algebra, and $M$ be an Archimedean quadratic module of $A$. We define a submultiplicative
seminorm $\norm{M}{}$ on $A$, associated with $M$. We show that the closure of $M$ with respect to $\norm{M}{}$-topology is equal
to the closure of $M$ with respect to the finest locally convex topology on $A$. We also compute the closure of any cone in 
$\norm{M}{}$-topology. Then we omit the Archimedean condition and show that there still exists a lmc topology associated to $M$, 
pursuing the same properties.
\end{abstract}
\maketitle
\section{Introduction}
The classical $K$-moment problem for a closed subset $K$ of $\reals^n$, $n\ge1$, is determining whether a given linear functional 
$\map{L}{\rx=\reals[X_1,\dots,X_n]}{\reals}$ is representable as an integral with respect to a positive Radon measure $\mu$, supported
on $K$ or not. In symbols
\[
	L(f)=\int_Kf~\rm{d}\mu.
\]
An obvious necessary condition for existence of such a measure is that for every $f\in\rx$ with $f\ge0$ on $K$, $L(f)$ 
should be non-negative. In 1936, Haviland proved that this necessary condition is also sufficient \cite{Hav01, Hav02}:
\begin{thm}[Haviland]
A linear function $\map{L}{\rx}{\reals}$ is representable as an integral with respect to a positive Radon measure
$\mu$ on $K$ if and only if $L(\pos{K})\subseteq\reals_{\geq0}$.
\end{thm}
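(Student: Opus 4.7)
The plan is to establish the non-trivial direction, sufficiency, by combining M.~Riesz's order extension theorem with the Riesz--Markov--Kakutani representation theorem; necessity is immediate, since pointwise positivity of $f$ on $K$ forces $L(f)=\int_K f\,\mathrm{d}\mu\geq 0$ for any Radon $\mu\geq 0$ supported on $K$.

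For sufficiency, assume $L(\pos{K})\subseteq\reals_{\geq 0}$. I would form the vector space $E=\rx+\Cnt{c}{K}$ of real functions on $K$ (polynomials viewed through their restriction to $K$), ordered by the pointwise non-negative cone $P$. The subspace $F=\rx$ is cofinal in $(E,P)$: given $h=p+g\in E$ with $g\in\Cnt{c}{K}$ and $\|g\|_\infty\leq M$, the polynomial $p+M\in F$ dominates $h$ on $K$. Since by hypothesis $L$ is non-negative on $F\cap P=\pos{K}$, the M.~Riesz extension theorem produces a linear extension $\tilde L:E\to\reals$ that is non-negative on $P$. Restricting $\tilde L$ to $\Cnt{c}{K}$ yields a positive linear functional, and Riesz--Markov--Kakutani delivers a positive Radon measure $\mu$ on $K$ with $\tilde L(g)=\int_K g\,\mathrm{d}\mu$ for every $g\in\Cnt{c}{K}$.

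The remaining identity $L(p)=\int_K p\,\mathrm{d}\mu$ for $p\in\rx$ is obtained via an increasing sequence of cutoffs $\varphi_n\in\Cnt{c}{K}$ with $0\leq\varphi_n\leq 1$ and $\varphi_n\uparrow 1$ pointwise. Then $p\varphi_n\in\Cnt{c}{K}$, so $\tilde L(p\varphi_n)=\int_K p\varphi_n\,\mathrm{d}\mu$ by construction. The main obstacle is synchronizing the two sides in the limit: polynomials need not be bounded on $K$, hence one cannot directly apply dominated convergence on the measure side nor push $\tilde L$ through a pointwise limit on the functional side. The standard workaround uses the elementary bound $|p|\leq 1+p^2$: positivity of $\tilde L$ together with $1+p^2\in\pos{K}$ controls $|\tilde L(p(1-\varphi_n))|$ by $\tilde L((1+p^2)(1-\varphi_n))$, while $\mu$-integrability of $1+p^2$ (extracted from $L(1+p^2)<\infty$ via monotone convergence against $(1+p^2)\varphi_n$) supplies the integrable majorant needed for dominated convergence on the measure side. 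With both limits handled in parallel, $L(p)=\tilde L(p)=\int_K p\,\mathrm{d}\mu$ follows, completing the proof.
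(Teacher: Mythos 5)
The paper does not prove this statement; it quotes Haviland's theorem and cites the original papers. Your strategy --- M.~Riesz extension on $E=\rx|_K+\Cnt{c}{K}$ followed by Riesz--Markov--Kakutani --- is the standard modern route, and everything up to the construction of $\mu$ is sound (modulo the small remark that $L$ descends to $\rx|_K$ because $p|_K=0$ forces $\pm p\in\pos{K}$ and hence $L(p)=0$). The gap is in the final convergence step. Your estimate
\[
\bigl|\tilde L\bigl(p(1-\varphi_n)\bigr)\bigr|\ \le\ \tilde L\bigl((1+p^2)(1-\varphi_n)\bigr)\ =\ L(1+p^2)-\int_K(1+p^2)\varphi_n\,\mathrm{d}\mu
\]
converges, by monotone convergence, to $L(1+p^2)-\int_K(1+p^2)\,\mathrm{d}\mu$, and positivity only gives that this quantity is $\ge0$, not $=0$. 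Asserting that it vanishes is precisely the identity $L(q)=\int_K q\,\mathrm{d}\mu$ for the non-negative polynomial $q=1+p^2$, i.e.\ the statement being proved, transferred from $p$ to $1+p^2$. The argument is therefore circular at its crux: for $q\in\pos{K}$ positivity and monotone convergence only ever yield $\int_K q\,\mathrm{d}\mu\le L(q)$, and the reverse inequality is the entire difficulty when $K$ is non-compact.

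The standard repair is a comparison with a faster-growing positive polynomial. By linearity it suffices to treat $p\ge0$ on $K$ (write $p=\tfrac12\bigl((p+1)^2-p^2-1\bigr)$, a combination of elements of $\pos{K}$). Choose $q=(1+\sum_i X_i^2)^k$ with $2k>\deg p$, so that $p/q\to0$ as $|x|\to\infty$ in $K$. Given $\epsilon>0$, pick a compact $D\subseteq K$ with $p\le\epsilon q$ on $K\setminus D$ and a cutoff $\varphi\in\Cnt{c}{K}$ with $0\le\varphi\le1$ and $\varphi\equiv1$ on $D$; then $p\le p\varphi+\epsilon q$ pointwise on $K$, whence $\tilde L(p)\le\tilde L(p\varphi)+\epsilon L(q)=\int_Kp\varphi\,\mathrm{d}\mu+\epsilon L(q)\le\int_Kp\,\mathrm{d}\mu+\epsilon L(q)$. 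Letting $\epsilon\to0$ gives $\tilde L(p)\le\int_Kp\,\mathrm{d}\mu$, and the opposite inequality you already have from monotone convergence, closing the proof.
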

Here, $\pos{K}=\{f\in\rx:f(x)\ge0\quad\forall x\in K\}$ and $\reals_{\ge0}=[0,\infty)$. 

The major flaw of Haviland's result is that the structure of $\pos{K}$ is usually very complicated and hence checking non-negativity 
of $L$ on $\pos{K}$ is practically infeasible. 

Schm\"{u}dgen assumed $K$ to be a basic compact semialgebraic sets and solved the $K$-moment problem in this particular
case effectively: A set $K\subseteq\reals^n$ is called basic closed semialgebraic, if there exists a finite set of polynomials
$S=\{g_1,\dots,g_m\}$ such that 
\[
	K=\K{S}=\{x\in\reals^n~:~g_i(x)\ge0,~i=1,\dots,m\}.
\]
The preordering generated by $S$, denoted by $T_S$, is the subset of $\rx$, consisting of all polynomials $f\in\rx$, such that 
\[
	f=\sigma_0+\sum_{e\in\{0,1\}^m}\sigma_e\underline{g}^e,
\]
where $\sigma_0, \sigma_e$, for $e=(e_1,\dots,e_m)\in\{0,1\}^m$ are finite sums of squares of polynomials and 
$\underline{g}^e:=g_1^{e_1}\cdots g_m^{e_m}$.
\begin{thm}[Schm\"{u}dgen \cite{Schm01}]\label{Schm}
Let $K=\K{S}$ be a a basic compact semialgebraic subset of $\reals^n$ and $\map{L}{\rx}{\reals}$ a functional. 
If $L(T_S)\subseteq\reals_{\ge0}$, then $L(\pos{K})\subseteq\reals_{\ge0}$.
\end{thm}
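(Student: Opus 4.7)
The plan is to reduce Schm\"udgen's theorem to two ingredients: a ``strict Positivstellensatz'' asserting that every polynomial strictly positive on a compact $\K{S}$ lies in $T_S$, together with a simple $\epsilon$-perturbation of the hypothesis on $L$. The conclusion as phrased asks only that $L(\pos{\K{S}})\subseteq\reals_{\ge0}$, so Haviland's theorem is not needed to deduce the measure representation; we only need to control the sign of $L$ on nonnegative polynomials.

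First I would establish that $T_S$ is \emph{Archimedean}: for every $f\in\rx$ there exists $N\in\naturals$ with $N\pm f\in T_S$. Because $T_S$ is a preordering closed under addition and multiplication, a standard reduction shows this is equivalent to producing a single $N$ with $N-\sum_{i=1}^n X_i^2\in T_S$. Since $\K{S}$ is compact, the polynomial $N-\sum X_i^2$ is strictly positive on $\K{S}$ for $N$ sufficiently large, and Stengle's Positivstellensatz supplies a rational certificate of the shape $p\cdot(N-\sum X_i^2)=1+q$ with $p,q\in T_S$. The delicate step is then to clear denominators and arrive at an honest representation $N'-\sum X_i^2\in T_S$ for some possibly larger $N'$; this denominator-removal, carried out by a careful algebraic induction on the generators of $S$, is the essential new ingredient of Schm\"udgen's original argument.

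Once $T_S$ is Archimedean, I would apply the Kadison--Dubois / Jacobi representation theorem for Archimedean preorderings: every $f\in\rx$ strictly positive on the set of characters $\varphi\colon\rx\to\reals$ with $\varphi(T_S)\subseteq\reals_{\ge0}$ lies in $T_S$. The Archimedean condition forces every such character to be evaluation at a point of $\reals^n$, and the constraints $\varphi(g_i)\ge 0$ identify this point set with $\K{S}$. Hence $f>0$ on $\K{S}$ implies $f\in T_S$.

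Finally, the theorem follows by approximation. For $f\in\pos{\K{S}}$ and any $\epsilon>0$ the polynomial $f+\epsilon$ is strictly positive on $\K{S}$, hence lies in $T_S$ by the previous step. The hypothesis $L(T_S)\subseteq\reals_{\ge0}$ gives $L(f)+\epsilon L(1)\ge0$, and since $1=1^2\in T_S$ we have $L(1)\ge0$, so letting $\epsilon\to 0^+$ yields $L(f)\ge 0$. The principal obstacle is the denominator-removal step used to prove Archimedeanity of $T_S$; the approximation argument is routine, and the passage from Archimedeanity to the strict Positivstellensatz is standard once the representation theorem is in hand.
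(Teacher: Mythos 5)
The paper does not prove this theorem at all --- it is quoted as background with a citation to Schm\"udgen's original article, so there is no internal argument to compare against. Your outline follows the now-standard algebraic route due to W\"ormann rather than Schm\"udgen's original operator-theoretic proof (which runs through the GNS construction, spectral theory of commuting self-adjoint operators, and Haviland): you reduce Archimedeanity of $T_S$ to exhibiting $N-\sum_{i=1}^n X_i^2\in T_S$, obtain that from compactness of $\K{S}$ via Stengle's Positivstellensatz plus denominator removal, invoke Kadison--Dubois/Jacobi for Archimedean preorderings to get the strict Positivstellensatz ($f>0$ on $\K{S}\Rightarrow f\in T_S$), and finish with the $\epsilon$-shift $f+\epsilon>0$, using $L(1)\ge0$ to let $\epsilon\to0^+$. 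That skeleton is correct, and the final approximation step as you state it is complete. Two small remarks: every $\reals$-algebra homomorphism $\rx\to\reals$ is automatically evaluation at a point of $\reals^n$, so Archimedeanity is not what forces the characters to be point evaluations --- it is what makes the representation theorem applicable in the first place.

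The genuine gap is the step you yourself flag: passing from the Stengle certificate $p\cdot\bigl(N-\sum X_i^2\bigr)=1+q$ with $p,q\in T_S$ to an honest membership $N'-\sum X_i^2\in T_S$. This is the entire mathematical content of Schm\"udgen's theorem in the algebraic approach; everything else in your outline is soft. It is not a routine clearing of denominators --- the standard argument (W\"ormann's lemma) requires an induction showing that $k-p\in T_S$ for suitable $k$, exploiting boundedness of $p$ on the compact set $\K{S}$ and repeatedly feeding the identity $ph=1+q$ back into itself, and it is exactly the place where compactness is consumed. As written, your proposal names this step and attributes it to ``a careful algebraic induction on the generators of $S$'' (it is actually an induction on a bound for $p$, not on the generators), but does not supply it, so the proof is an accurate roadmap rather than a complete argument.
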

Now, since elements of $T_S$ are finitely representable by polynomials in $S$, checking non-negativity of $L$ over $T_S$ is 
practical and if $\K{S}$ is compact, Schm\"{u}dgen's theorem guarantees non-negativity of $L$ on $\pos{\K{S}}$. Therefore,
in this case, one can use Haviland's theorem to deduce existence of a representing Radon measure for $L$.

A closer look at theorem \ref{Schm} reveals an equivalent topological statement. Let $\varphi$ be the finest non-discrete locally
convex topology on $\rx$. Then \ref{Schm} can be read as
\begin{equation}\label{SchmTop}
	\cl{\varphi}{T_S}=\pos{\K{S}}.
\end{equation}
We explain this in Remark \ref{ddual}.

Following the topological approach, Berg and Maserick in \cite{CB-PHM} showed that 
\begin{equation}\label{BergTop}
	\cl{\norm{1}{}}{\sos}=\pos{[-1,1]^n},
\end{equation}
where $\sos$ is the set of all finite sums of squares of polynomials. In terms of moments, if $L$ is positive semidefinite and 
$\norm{1}{}$-continuous, then $L$ admits integral representation by a Radon measure on $[-1,1]^n$, where 
$\norm{1}{\sum_{\alpha}f_{\alpha}\ux^{\alpha}}=\sum_{\alpha}|f_{\alpha}|$. 
They also generalized this for weighted $\norm{1}{}$-topologies.
In both \eqref{SchmTop} and \eqref{BergTop}, the left side of the equality is the closure of a cone and the right side is $\pos{K}$
for some $K\subseteq\reals^n$. Relaxing the relation between objects of these equations, in \cite{GKM} we started to study the 
following general equation:
\begin{equation}\label{GenMntEq}
	\cl{\tau}{C}=\pos{K},
\end{equation}
where $C\subseteq\rx$ is a cone, $\tau$ a locally convex topology on $\rx$ and $K\subseteq\reals^n$ a closed set. It is also explained
that if \eqref{GenMntEq} holds, then every $\tau$-continuous linear functional with $L(C)\subseteq\reals_{\ge0}$ admits an integral
representation with respect to a Radon measure on $K$. Furthermore, we replaced $\rx$ with a unital commutative $\reals$-algebra and
$\reals^n$ with $\X{A}$, the set of all real valued $\reals$-algebra homomorphisms on $A$, equipped with subspace topology, where 
$\X{A}$ is considered as a subspace of $\reals^A$, with product topology.

In section \ref{KTop-LMC}, we briefly review the solutions of \eqref{GenMntEq}, studied in \cite{GK} and \cite{GKM}. First we fix a
closed set $K\subseteq\X{A}$ and solve \eqref{GenMntEq} for given cones $C$, in terms of the topology $\tau$, which slightly generalizes
results of \cite{GK}. Then we fix a locally multiplicatively convex topology $\tau$ on $A$ and for a given cone $C$ we solve 
\eqref{GenMntEq} in terms of $K$.

In section \ref{CnTop}, we associate a topology to any Archimedean cone $C$ and study its properties.
Then in section \ref{ClCnTop}, we fix an Archimedean cone $C$ and for given sets $K$, solve \eqref{GenMntEq} in terms of the topology 
$\tau$, introduced in section \ref{CnTop}. Moreover, we generalize our results to the case where $C$ is not Archimedean.
\section{Solutions of \eqref{GenMntEq} for a fixed $K$ or a fixed topology}
\label{KTop-LMC}
In this section we briefly review known solutions of \eqref{GenMntEq}, studied in \cite{GK} and \cite{GKM}.
We begin by introducing terms and notations that will be used in this article.

From now on, we always assume that $A$ is a unital commutative $\reals$-algebra. A cone of $A$ is a set $C\subseteq A$ such that
\[
	C+C\subseteq C,\quad \reals_{\ge0}\cdot C\subseteq C.
\]
A quadratic module $M$, is a cone, containing $0$ and $1$ which is closed under multiplication by sums of squares; i.e.,
\[
	\sum A^2\cdot M\subseteq M.
\]
A cone $C$ is said to be \textit{Archimedean}, if for every $a\in A$, there exists $r\in\reals_{\ge0}$ such that $r\pm a\in C$.
If a quadratic module $M$, is also closed under multiplication (i.e. $M.M\subseteq M$) then we say that $M$ is a preordering.

Suppose that $M$ is a quadratic module, then it is easy to see that $I=M\cap-M$ is an ideal of $A$: Clearly $0\in I$ and 
$a^2I\subseteq I$ for every $a\in A$. Thus 
\[
	aI=\left((\frac{a+1}{2})^2-(\frac{a-1}{2})^2\right)I\in I-I\subseteq I.
\]
The ideal $I$ is called the \textit{support} of $M$. Clearly, $M$ is a proper subset of $A$ if and only if $-1\not\in M$.

The set of all real valued $\reals$-algebra homomorphisms on $A$ is denoted by $\X{A}$.
If $\tau$  is a locally convex topology on $A$, the set of all $\tau$-continuous elements of $\X{A}$ will be denoted by 
$\Sp{\tau}{A}$ which is known in the literature as the Gelfand spectrum of $(A,\tau)$.
Every element $a$ of $A$ induces a map on $\X{A}$, in the following way:
\[
\begin{array}{rrll}
	\hat{a}: & \X{A} & \longrightarrow & \reals\\
		& \alpha & \mapsto & \alpha(a).
\end{array}
\]
We denote the set of all elements of $\X{A}$ that are non-negative on $M$ by $\K{M}$. In symbols:
\[
	\K{M}:=\{\alpha\in\X{A}~:~\alpha(M)\subseteq\reals_{\ge0}\}.
\]
\begin{dfn}~
\begin{enumerate}
	\item{A set $U\subseteq A$ is multiplicatively closed, if $U\cdot U\subseteq U$.}
	\item{A \textit{locally multiplicatively convex topology} (lmc) on $A$ is a locally convex topology which admits a 
	system of neighbourhoods at $0$, consisting of multiplicatively closed convex sets.}
	\item{A seminorm $\rho$ on $A$ is called \textit{submultiplicative}, if $\rho(ab)\leq\rho(a)\rho(b)$, for all $a,b\in A$.}
\end{enumerate}
\end{dfn}
We recall the following result about lmc topologies.
\begin{thm}\label{LMCTop}
A locally convex vector space topology $\tau$ on $A$ is lmc if and only if $\tau$ is generated by a family of submultiplicative 
seminorms on $A$.
\end{thm}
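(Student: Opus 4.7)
The plan is to prove the two implications separately. The sufficient direction (submultiplicative seminorms $\Rightarrow$ lmc) is a direct calculation with sub-basic seminorm balls; the necessary direction requires first refining the given basis of multiplicatively closed convex neighbourhoods to an \emph{absolutely} convex one before applying the Minkowski-functional construction.

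For the $(\Leftarrow)$ direction, let $\{\rho_i\}_{i\in I}$ be a generating family of submultiplicative seminorms. The sub-basic neighbourhoods $U_{i,\epsilon}=\{a\in A:\rho_i(a)<\epsilon\}$, with $i\in I$ and $0<\epsilon\le 1$, are convex, and submultiplicativity gives multiplicative closure: if $a,b\in U_{i,\epsilon}$ then $\rho_i(ab)\le\rho_i(a)\rho_i(b)<\epsilon^2\le\epsilon$. Finite intersections preserve both properties, so $\tau$ has a neighbourhood basis of convex multiplicatively closed sets.

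For the $(\Rightarrow)$ direction, start with a basis $\mathcal{B}$ of convex multiplicatively closed neighbourhoods of $0$, and set $V=U\cap(-U)$ for each $U\in\mathcal{B}$. These sets are convex, symmetric, and neighbourhoods of $0$; they also remain multiplicatively closed, since for $a,b\in V$ both $ab\in U\cdot U\subseteq U$ and $-ab=a(-b)\in U\cdot U\subseteq U$, whence $ab\in U\cap(-U)=V$. Being convex, symmetric, and absorbing, each $V$ is absolutely convex, so its Minkowski functional $\rho_V(a)=\inf\{t>0:a\in tV\}$ is a seminorm. Submultiplicativity follows immediately: $a\in sV$ and $b\in tV$ give $ab\in st(V\cdot V)\subseteq stV$, whence $\rho_V(ab)\le st$, and infimising yields $\rho_V(ab)\le\rho_V(a)\rho_V(b)$.

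It remains to check that the family $\{\rho_V:U\in\mathcal{B}\}$ generates $\tau$, which follows because each $V$ is itself a $\tau$-neighbourhood of $0$, and conversely the open unit $\rho_V$-ball sits inside $V\subseteq U$. The only non-automatic step in the whole proof is the preservation of multiplicative closure under $U\mapsto U\cap(-U)$; this is a short sign-analysis argument, after which both directions reduce to routine locally convex and Minkowski-functional manipulations.
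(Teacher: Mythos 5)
Your proof is correct. The paper does not actually prove this statement --- it simply cites \cite[\S 4.3-2]{BNS} --- and what you have written is the standard argument found there: submultiplicativity of the seminorms gives multiplicative closure of the $\epsilon$-balls for $\epsilon\le 1$ in one direction, and in the other direction the symmetrization $V=U\cap(-U)$ followed by the Minkowski functional does the job, with the one genuinely non-routine point (that $U\cap(-U)$ stays multiplicatively closed) checked correctly via $-ab=a(-b)$.
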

\begin{proof}
See \cite[\S 4.3-2]{BNS}.
\end{proof}
The following result from \cite{GKM} will be used in what follows:
\begin{thm}\label{GKM-main}
Let $\rho$ be a submultiplicative seminorm on $A$ and $M$ be a quadratic module of $A$. Then $\cl{\rho}{M}=\pos{\K{M}\cap\Sp{\rho}{A}}$.
\end{thm}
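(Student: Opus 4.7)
The statement is an equality of subsets of $A$; I would prove the two inclusions separately. The inclusion $\cl{\rho}{M}\subseteq\pos{\K{M}\cap\Sp{\rho}{A}}$ is immediate from the definitions. Given $a\in\cl{\rho}{M}$, pick a sequence $(a_n)\subseteq M$ converging to $a$ in the $\rho$-topology. For any $\alpha\in\K{M}\cap\Sp{\rho}{A}$, $\rho$-continuity of $\alpha$ yields $\alpha(a_n)\to\alpha(a)$, and since $\alpha(a_n)\ge 0$ for every $n$, we conclude $\alpha(a)\ge 0$, so $a\in\pos{\K{M}\cap\Sp{\rho}{A}}$.

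For the reverse inclusion I would argue by contradiction. Suppose $a\in\pos{\K{M}\cap\Sp{\rho}{A}}$ but $a\notin\cl{\rho}{M}$. Since $\cl{\rho}{M}$ is a closed convex subset of the locally convex space $(A,\rho)$ and $\{a\}$ is compact and disjoint from it, the geometric Hahn--Banach theorem produces a $\rho$-continuous linear functional $\map{L}{A}{\reals}$ that strictly separates them, giving $L(a)<0$ while $L(m)\ge 0$ for every $m\in M$. The plan is then to represent such an $L$ as integration against a positive Radon measure $\mu$ supported on $\K{M}\cap\Sp{\rho}{A}$; once this is in hand, the hypothesis $\hat{a}\ge 0$ on $\supp\mu$ gives $L(a)=\int\hat{a}\,d\mu\ge 0$, contradicting $L(a)<0$.

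Constructing this representing measure is the main obstacle, and the place where submultiplicativity is used essentially. I would pass to the quotient $A/\ker\rho$, on which $\rho$ descends to a genuine submultiplicative norm, and complete to obtain a unital commutative real Banach algebra $B$. Submultiplicativity ensures that multiplication extends continuously to $B$, so that $\rho$-continuous characters of $A$ correspond bijectively to continuous characters of $B$, and by Gelfand theory the set of the latter forms a compact Hausdorff space $\mathfrak{X}$ into whose space of continuous functions $B$ embeds via the Gelfand transform. The functional $L$ extends continuously to $B$ and remains non-negative on the closure of the image of $M$. A Haviland-type argument on $\mathfrak{X}$ --- essentially the Riesz representation theorem applied to the push-forward of $L$ along the Gelfand transform, together with standard Stone--Weierstrass-style density --- then produces the desired positive Radon measure, whose support must lie in $\K{M}\cap\Sp{\rho}{A}$ precisely because $L\ge 0$ on $M$.
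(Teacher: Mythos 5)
The paper does not actually prove this statement; its ``proof'' is a citation of \cite[Theorem 3.7]{GKM}, where the argument runs through Jacobi's representation theorem rather than through representing measures. Your first inclusion is fine. The gap is in the reverse inclusion. After the (correct) Hahn--Banach separation step you hold a $\rho$-continuous linear functional $L$ with $L(M)\subseteq\reals_{\ge0}$ and $L(a)<0$, and everything now hinges on showing that such an $L$ is nonnegative on $\pos{\K{M}\cap\Sp{\rho}{A}}$. That is not a routine consequence of Gelfand, Riesz and Stone--Weierstrass: to push $L$ forward along the Gelfand transform and apply Riesz you must first know that $\hat b\ge0$ on $\mathfrak{X}$ implies $L(b)\ge0$, and by the bidual description of closures (Remark \ref{ddual}) that is essentially the statement being proved. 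Stone--Weierstrass gives approximations $\hat c_n^{\,2}\to\hat b+\epsilon$ uniformly on $\mathfrak{X}$, but uniform convergence on $\mathfrak{X}$ does not imply convergence in $B$ (the Gelfand transform of a real Banach algebra is only norm-decreasing, generally far from isometric), so you cannot conclude $L(b+\epsilon)=\lim L(c_n^2)\ge0$. Your ``Haviland-type argument'' is therefore the entire content of the theorem, left as a black box.

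The standard ways to close the gap use submultiplicativity more concretely than you do. The route of \cite{GKM}: if $r>\rho(a)$, the partial sums $s_N$ of the binomial series for $(1\mp a/r)^{1/2}$ lie in $A$ and satisfy $\rho\bigl(s_N^2-(1\mp a/r)\bigr)\to0$ by absolute convergence, so $r\pm a\in\cl{\rho}{\sum A^2}\subseteq\cl{\rho}{M}$; hence $\cl{\rho}{M}$ is an Archimedean quadratic module with $\K{\cl{\rho}{M}}=\K{M}\cap\Sp{\rho}{A}$, and Jacobi's theorem (Theorem \ref{Jacobi}) gives $a+\epsilon\in\cl{\rho}{M}$ for every $\epsilon>0$, whence $a\in\cl{\rho}{M}$ --- no measure is needed. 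If you insist on producing a measure, the honest path is a GNS construction on $L$ plus the spectral theorem, where submultiplicativity makes the multiplication operators bounded and self-adjoint. Two further loose ends in your sketch: the localization $\supp\mu\subseteq\K{M}$ does not follow ``precisely because $L\ge0$ on $M$'' (a sign-changing integrand can have nonnegative integral); you need $a^2M\subseteq M$ and a density argument to force $\hat m\ge0$ $\mu$-a.e.\ for each $m\in M$. And the case $\Sp{\rho}{A}=\emptyset$, which genuinely occurs for real algebras, must be treated separately: there is then no measure to construct, yet the theorem asserts $\cl{\rho}{M}=\pos{\emptyset}=A$, which Jacobi's theorem delivers vacuously but your argument does not.
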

\begin{proof}
See \cite[Theorem 3.7]{GKM}.
\end{proof}
\subsection{The case where $K$ is fixed}
Now lets fix a set $\emptyset\neq X\subseteq\X{A}$. We introduce a topology $\T{X}$ on $A$ such that \eqref{GenMntEq} holds for $C$ to be 
any quadratic module $M$, $\tau=\T{X}$ and $K=X\cap\K{M}$; i.e.,
\[
	\cl{\T{X}}{M}=\pos{X\cap\K{M}}.
\]
Since $X\neq\emptyset$, the family $k(X)=\{D\subseteq X : \emptyset\neq D\textrm{ is compact}\}$ is non-empty. To every $D\in k(X)$ we 
assign a submultiplicative seminorm $\rho_D$ defined by 
\[
	\rho_D(a)=\sup_{\alpha\in D}|\hat{a}(\alpha)|.
\]
\begin{dfn}
The family $\{\rho_D\}_{D\in k(X)}$ induces a locally multiplicatively convex topology $\T{X}$, on $A$.
\end{dfn}
We note that the topology $\T{X}$ defined above is slightly different from the one defined in \cite{GK}. In the former, the topology
$\T{X}$ is induced by the family of seminorms, induced by evaluations on each single point of $X$, i.e., $k(X)=\{\{\alpha\}:\alpha\in X\}$.
\begin{lemma}
Let $D\subseteq\X{A}$ be a compact set. Then $\Sp{\rho_D}{A}=D$.
\end{lemma}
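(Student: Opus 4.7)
The equality $\Sp{\rho_D}{A} = D$ splits into two inclusions. The forward inclusion $D\subseteq\Sp{\rho_D}{A}$ is immediate from the definition of $\rho_D$: for any $\beta\in D$ and any $a\in A$ we have $|\beta(a)| = |\hat a(\beta)| \leq \sup_{\alpha\in D}|\hat a(\alpha)| = \rho_D(a)$, so $\beta$ is automatically $\rho_D$-continuous.

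For the reverse inclusion, my plan is to first upgrade bare continuity to the sharp pointwise bound $|\beta(a)|\leq\rho_D(a)$, and then use compactness of $D$ to show that this bound forces $\beta\in D$. The first step is the standard spectral-radius trick: if $\beta$ is a $\rho_D$-continuous $\reals$-algebra homomorphism, then $|\beta(a)|\leq C\rho_D(a)$ for some $C>0$; applying this estimate to $a^n$, using $\beta(a^n)=\beta(a)^n$ together with submultiplicativity $\rho_D(a^n)\leq\rho_D(a)^n$, and letting $n\to\infty$ yields $|\beta(a)|\leq\rho_D(a)$ for every $a\in A$.

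The second step is a separation argument. Suppose, for contradiction, that $\beta\notin D$. Since $\X{A}$ carries the Hausdorff product topology inherited from $\reals^A$ and $D$ is compact, hence closed, there exist $a_1,\ldots,a_k\in A$ and $\epsilon>0$ with $\max_{i}|\alpha(a_i)-\beta(a_i)|\geq\epsilon$ for every $\alpha\in D$. Setting $b_i=a_i-\beta(a_i)\cdot 1\in A$ and $c=\sum_{i=1}^k b_i^2$, I produce an element of $A$ with $\beta(c)=0$ and $\alpha(c)\geq\epsilon^2$ for all $\alpha\in D$. Letting $N=\rho_D(c)$, which is finite because $\hat c$ is continuous on the compact set $D$, and putting $f=N\cdot 1-c$, I find $\beta(f)=N$ while $0\leq\alpha(f)\leq N-\epsilon^2$ on $D$; hence $\rho_D(f)\leq N-\epsilon^2<N=|\beta(f)|$, contradicting $|\beta|\leq\rho_D$. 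The only subtle point is that the separating element $c$ must be built from ring operations so that the homomorphism $\beta$ genuinely annihilates it via $\beta(b_i^2)=\beta(b_i)^2=0$; this is the one place where the algebra structure is exploited rather than merely the topological and linear structure.
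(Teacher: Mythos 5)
Your proof is correct, and for the reverse inclusion it takes a genuinely different route from the paper. The paper also reduces to the pointwise criterion ``$\beta$ is $\rho_D$-continuous iff $|\beta(a)|\leq\rho_D(a)$ for all $a$,'' but merely asserts it; you actually justify it with the spectral-radius argument $|\beta(a)|^n=|\beta(a^n)|\leq C\,\rho_D(a^n)\leq C\,\rho_D(a)^n$, which is a worthwhile addition. Where the two proofs diverge is in how they produce an element violating that bound for $\beta\notin D$: the paper invokes complete regularity of $\X{A}$ to get $f\in\Cnt{}{\X{A}}$ with $f(\beta)=1$, $f|_D=0$, and then applies Stone--Weierstrass on the compact set $D\cup\{\beta\}$ to approximate $f$ by some $a_\epsilon\in A$ with $\rho_D(a_\epsilon)\leq\epsilon<|\beta(a_\epsilon)|$; you instead extract finitely many coordinates $a_1,\dots,a_k$ and $\epsilon>0$ from a basic product-topology neighbourhood separating $\beta$ from the closed set $D$, and build the separating element by hand as $c=\sum_i(a_i-\beta(a_i)\cdot 1)^2$, so that $\beta(c)=0$ while $\hat c\geq\epsilon^2$ on $D$, giving the contradiction via $f=N\cdot 1-c$. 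Your construction is more elementary and self-contained (no Stone--Weierstrass, no appeal to the density of $\{\hat a|_{D'}\}$ in $\Cnt{}{D\cup\{\beta\}}$), at the cost of being slightly longer; the paper's argument is shorter modulo the cited approximation theorem and the unproved continuity criterion. Both hinge on the same final mechanism: exhibiting $f\in A$ with $\rho_D(f)<|\beta(f)|$.
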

\begin{proof}
Note that an element $\alpha\in\X{A}$ is $\rho_D$-continuous if and only if $|\alpha(a)|\leq\rho_D(a)$ for all $a\in A$.
Thus every $\alpha\in D$ is $\rho_D$-continuous. Let $\beta\in\X{A}\setminus D$ and $D'=D\cup\{\beta\}$. Since $\X{A}$ is completely
regular, there exists a continuous function $f\in\Cnt{}{\X{A}}$ such that $f(\beta)=1$ and $f|_D=0$. 
Note that $\rho_D$ is also extendible to $\Cnt{}{\X{A}}$ naturally by defining $\rho_D(g)=\sup_{\alpha\in D}|g(D)|$ for each 
$g\in\Cnt{}{\X{A}}$. By Stone--Weierstrass theorem, for every $\epsilon>0$, there exist $a_{\epsilon}\in A$ such that 
$\rho_{D'}(f-a_{\epsilon})<\epsilon$. Clearly $|1-\beta(a_{\epsilon})|\leq\epsilon$ and $\rho_D(a_{\epsilon})\leq\epsilon$. So for
$\epsilon<\frac{1}{2}$, we get $\rho_D(a_{\epsilon})<|\beta(a_{\epsilon})|$ which violates the necessary and sufficient condition for
continuity of $\beta$. Thus $\beta\not\in\Sp{\rho_D}{A}$.
\end{proof}
\begin{crl}
$\Sp{\T{X}}{A}=X$.
\end{crl}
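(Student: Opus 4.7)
The plan is to prove the corollary by a two-sided inclusion, using the preceding lemma as the main engine.

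For the inclusion $X \subseteq \Sp{\T{X}}{A}$, I would argue as follows. For every $\alpha \in X$, the singleton $\{\alpha\}$ is a compact subset of $X$, hence $\{\alpha\} \in k(X)$, and the associated seminorm is simply $\rho_{\{\alpha\}}(a) = |\hat{a}(\alpha)| = |\alpha(a)|$. In particular $|\alpha(a)| \le \rho_{\{\alpha\}}(a)$ for all $a \in A$, so $\alpha$ is $\rho_{\{\alpha\}}$-continuous and therefore $\T{X}$-continuous.

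For the reverse inclusion $\Sp{\T{X}}{A} \subseteq X$, I would take $\alpha \in \Sp{\T{X}}{A}$ and unpack what continuity means. Because $\T{X}$ is generated by the family $\{\rho_D\}_{D \in k(X)}$, there exist finitely many compact sets $D_1,\dots,D_k \in k(X)$ and a constant $c>0$ such that $|\alpha(a)| \le c\,\max_i \rho_{D_i}(a)$ for every $a \in A$. The key simplification is that $k(X)$ is closed under finite unions: setting $D := D_1 \cup \cdots \cup D_k \in k(X)$, one has $\max_i \rho_{D_i} = \rho_D$, so $|\alpha(a)| \le c\, \rho_D(a)$ for all $a$. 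To remove the constant $c$, I would use the submultiplicativity of $\rho_D$ together with the fact that $\alpha$ is an algebra homomorphism: applying the estimate to $a^n$ gives
\[
|\alpha(a)|^n = |\alpha(a^n)| \le c\,\rho_D(a^n) \le c\,\rho_D(a)^n,
\]
and taking $n$-th roots and letting $n\to\infty$ yields $|\alpha(a)| \le \rho_D(a)$ for all $a \in A$. By the previous lemma this means $\alpha \in \Sp{\rho_D}{A} = D \subseteq X$, as required.

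The steps are all essentially routine, but the one place where care is needed is the passage from a finite family of seminorms to a single one; the argument hinges on the two structural facts that $k(X)$ is stable under finite unions and that each $\rho_D$ is submultiplicative, allowing the spectral-radius-style argument above to strip away the constant. Everything else is either a direct application of the definitions or a quotation of the preceding lemma.
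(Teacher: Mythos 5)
Your proof is correct, and its skeleton is the same as the paper's: reduce to the single-seminorm statement $\Sp{\rho_D}{A}=D$ from the preceding lemma, and glue over $D\in k(X)$. The difference is in how the gluing is justified. The paper simply writes $\Sp{\T{X}}{A}=\bigcup_{D\in k(X)}\Sp{\rho_D}{A}$ and, in effect, appeals to the general fact (stated as a proposition in the next subsection, with a citation to \cite{BNS}) that for an lmc topology generated by a \emph{saturated} family of submultiplicative seminorms the spectrum is the union of the spectra of the individual seminorms. You instead prove that inclusion by hand: you observe that $k(X)$ is stable under finite unions, so $\max_i\rho_{D_i}=\rho_{D_1\cup\cdots\cup D_k}$ (this is exactly the saturation property), and then you strip off the constant $c$ with the spectral-radius argument $|\alpha(a)|^n=|\alpha(a^n)|\le c\,\rho_D(a)^n$, which is precisely the standard proof of the cited textbook fact in this special case. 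Your version is therefore more self-contained; the paper's is shorter because it outsources the only nontrivial step. Both arguments ultimately rest on the same two ingredients you identify: saturation of the generating family and submultiplicativity of each $\rho_D$.
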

\begin{proof}
\[
\begin{array}{lcl}
	\Sp{\T{X}}{A} & = & \bigcup_{D\in k(X)}\Sp{\rho_D}{A}\\
		& = & \bigcup_{D\in k(X)}D\\
		& = & X.
\end{array}
\]
\end{proof}
\begin{thm}\label{GK-main}
Let $M$ be a quadratic module of $A$ and $X\subseteq\X{A}$. Then $\cl{\T{X}}{M}=\pos{X\cap\K{M}}$.
\end{thm}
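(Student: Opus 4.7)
The plan is to combine Theorem \ref{GKM-main} with the preceding corollary by reducing the question to one seminorm at a time. The topology $\T{X}$ is generated by the family of submultiplicative seminorms $\{\rho_D\}_{D\in k(X)}$, and this family is directed: for $D_1,D_2\in k(X)$, the union $D_1\cup D_2$ is again compact, lies in $X$, and satisfies $\rho_{D_i}\leq\rho_{D_1\cup D_2}$ for $i=1,2$. Consequently the basic $\T{X}$-neighborhoods of $0$ are precisely the sets $\{a\in A:\rho_D(a)<\epsilon\}$ for some $D\in k(X)$ and $\epsilon>0$, and therefore
\[
\cl{\T{X}}{M}=\bigcap_{D\in k(X)}\cl{\rho_D}{M}.
\]

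Next I would apply Theorem \ref{GKM-main} to each $\rho_D$. Since $\rho_D$ is a submultiplicative seminorm, that theorem gives $\cl{\rho_D}{M}=\pos{\K{M}\cap\Sp{\rho_D}{A}}$, and the preceding lemma identifies $\Sp{\rho_D}{A}=D$. Hence
\[
\cl{\T{X}}{M}=\bigcap_{D\in k(X)}\pos{\K{M}\cap D}.
\]

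It then remains to prove the set equality $\bigcap_{D\in k(X)}\pos{\K{M}\cap D}=\pos{X\cap\K{M}}$. The inclusion $\supseteq$ is immediate since $\K{M}\cap D\subseteq\K{M}\cap X$ for every $D\in k(X)$, so any element non-negative on the larger set is non-negative on each smaller one. For the reverse inclusion, fix $f$ in the intersection and an arbitrary $\alpha\in X\cap\K{M}$; the singleton $\{\alpha\}$ is compact and contained in $X$, so $\{\alpha\}\in k(X)$, and $f\in\pos{\K{M}\cap\{\alpha\}}=\pos{\{\alpha\}}$ yields $\hat{f}(\alpha)\geq 0$. Since $\alpha$ was arbitrary, $f\in\pos{X\cap\K{M}}$.

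There is no real obstacle here: the substantive work has already been done in Theorem \ref{GKM-main} and in the lemma identifying the Gelfand spectrum of $\rho_D$. The only thing that needs care is justifying the reduction $\cl{\T{X}}{M}=\bigcap_D\cl{\rho_D}{M}$, which relies precisely on the directedness of the defining family of seminorms noted above; without directedness one would only get the analogous statement for finite intersections, but here finite unions of compact subsets of $X$ are again in $k(X)$, so directedness is automatic.
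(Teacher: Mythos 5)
Your proposal is correct and follows essentially the same route as the paper: reduce to $\cl{\T{X}}{M}=\bigcap_{D\in k(X)}\cl{\rho_D}{M}$, apply Theorem \ref{GKM-main} together with the identification $\Sp{\rho_D}{A}=D$, and then pass from the intersection over compact $D$ to $\pos{X\cap\K{M}}$. You in fact supply slightly more justification than the paper does, namely the directedness of $\{\rho_D\}$ (via unions of compacta) underlying the intersection formula, and the singleton argument for the final set equality.
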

\begin{proof}
Applying theorem \ref{GKM-main}, for every $D\in k(X)$, $\cl{\rho_D}{M}=\pos{D\cap\K{M}}$. Therefore
\[
\begin{array}{lcl}
	\cl{\T{X}}{M} & = & \bigcap_{D\in k(X)}\cl{\rho_D}{M}\\
		& = & \bigcap_{D\in k(X)}\pos{D\cap\K{M}}\\
		& = & \pos{\bigcup_{D\in k(X)}D\cap\K{M}}\\
		& = & \pos{X\cap\K{M}},
\end{array}
\]
as desired.
\end{proof}
\subsection{The case where $\tau$ is fixed}
We now review the situation where a lmc topology $\tau$ on $A$ is fixed, a quadratic module $M$ is given, and solve \eqref{GenMntEq}
for $K$ as it is explained in \cite[\S 5]{GKM}.

Suppose that $\tau$ is a lmc topology. By theorem \ref{LMCTop}, there exists a family $\mathcal{F}$ of submultiplicative seminorms,
inducing $\tau$ on $A$. For $\rho_1,\rho_2\in\mathcal{F}$, the map defined by $\rho(a)=\max\{\rho_1(a),\rho_2(a)\}$ is again a 
submultiplicative seminorms and the topology induced by $\mathcal{F}\cup\{\rho\}$ is again equal to $\tau$. Inductively, if we add 
the maximum of any finite number of elements of $\mathcal{F}$ to it, the resulting topology will not change. A family of seminorms 
which contains the maximum of all finite sets of its elements is called saturated. Clearly every family of seminorms can be completed 
to a saturated one. The advantage of working with saturated families over non saturated is explained in the next proposition.
\begin{prop}
Suppose $\tau$ is an lmc topology on $A$ generated by a saturated family $\mathcal{F}$ of submultiplicative seminorms of $A$. Then
$\Sp{\tau}{A}=\bigcup_{\rho\in\mathcal{F}}\Sp{\rho}{A}$.
\end{prop}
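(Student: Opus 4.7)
The plan is to prove the two set-theoretic inclusions separately. The inclusion $\bigcup_{\rho\in\mathcal{F}}\Sp{\rho}{A}\subseteq\Sp{\tau}{A}$ is essentially immediate: every $\rho\in\mathcal{F}$ generates a topology that is coarser than $\tau$, hence any $\rho$-continuous character is automatically $\tau$-continuous. The content is in the reverse inclusion, which is where the saturation hypothesis and the submultiplicativity of the seminorms do the work.

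For the reverse inclusion, I would start with $\alpha\in\Sp{\tau}{A}$ and unpack continuity at $0$. A basic $\tau$-neighbourhood of $0$ has the form $\{a\in A:\rho_{1}(a)<\varepsilon_{1},\dots,\rho_{n}(a)<\varepsilon_{n}\}$ with $\rho_{i}\in\mathcal{F}$. Because $\mathcal{F}$ is saturated, $\rho:=\max\{\rho_{1},\dots,\rho_{n}\}$ lies in $\mathcal{F}$, and the simpler set $\{a:\rho(a)<\varepsilon\}$ (with $\varepsilon=\min_{i}\varepsilon_{i}$) is contained in the original neighbourhood. Applying this to the $\tau$-neighbourhood $\{a:|\alpha(a)|<1\}$ yields a single $\rho\in\mathcal{F}$ and $\varepsilon>0$ with the implication $\rho(a)<\varepsilon\;\Longrightarrow\;|\alpha(a)|<1$.

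The main step is then to upgrade this local bound to the global estimate $|\alpha(a)|\le\rho(a)$ for all $a\in A$, which is the well-known characterisation of $\rho$-continuity for an algebra homomorphism and will place $\alpha$ in $\Sp{\rho}{A}$. Suppose for contradiction that some $a\in A$ satisfies $|\alpha(a)|>\rho(a)$. If $\rho(a)>0$, set $c=a/\rho(a)$, so $\rho(c)=1$ while $r:=|\alpha(c)|=|\alpha(a)|/\rho(a)>1$. Using submultiplicativity of $\rho$ and multiplicativity of $\alpha$, the element $b_{n}:=(\varepsilon/2)\,c^{n}$ satisfies $\rho(b_{n})\le\varepsilon/2<\varepsilon$ while $|\alpha(b_{n})|=(\varepsilon/2)\,r^{n}\to\infty$, contradicting the implication above for $n$ large. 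The edge case $\rho(a)=0$ with $\alpha(a)\neq 0$ is handled analogously by scaling: $\rho(\lambda a)=0$ for every $\lambda$, whereas $|\alpha(\lambda a)|=|\lambda||\alpha(a)|$ can be made arbitrarily large.

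The principal obstacle is this boosting step, which is where I expect all the work to concentrate; once the global estimate $|\alpha|\le\rho$ is in hand, membership $\alpha\in\Sp{\rho}{A}$ is a direct consequence. The role of saturation is exactly to reduce a finite conjunction of seminorm constraints to a single one, without which one would only get a bound of the form $|\alpha(a)|\le C\max_{i}\rho_{i}(a)$ and the submultiplicative/multiplicative scaling trick would not immediately yield continuity with respect to any single member of $\mathcal{F}$.
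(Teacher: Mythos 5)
Your proof is correct. Note, however, that the paper does not actually prove this proposition at all: it simply cites \cite[\S 4.10-7]{BNS}, so your argument is a self-contained replacement for an external reference rather than an alternative to an in-paper proof. The two ingredients you isolate are exactly the right ones and are used correctly: saturation collapses a finite conjunction of seminorm constraints $\rho_1(a)<\varepsilon_1,\dots,\rho_n(a)<\varepsilon_n$ into a single constraint $\rho(a)<\varepsilon$ with $\rho=\max_i\rho_i\in\mathcal{F}$, and then the multiplicativity of the character $\alpha$ against the submultiplicativity of $\rho$ upgrades the local implication $\rho(a)<\varepsilon\Rightarrow|\alpha(a)|<1$ to the global bound $|\alpha(a)|\le\rho(a)$ via the power trick $b_n=(\varepsilon/2)c^n$ (and scaling for the degenerate case $\rho(a)=0$, $\alpha(a)\neq0$). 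This yields continuity with constant $1$, which is precisely the characterisation of $\Sp{\rho}{A}$ that the paper itself uses in its lemma computing $\Sp{\rho_D}{A}$. Your closing remark about why saturation is indispensable --- without it one only gets $|\alpha(a)|\le C\max_i\rho_i(a)$, and $\max_i\rho_i$ need not belong to $\mathcal{F}$ --- correctly identifies the one place where the hypothesis is genuinely needed. The only cosmetic caveat is that one should observe that a linear functional continuous at $0$ is continuous everywhere, so that testing against the single neighbourhood $\{a:|\alpha(a)|<1\}$ suffices; this is standard and does not affect the validity of the argument.
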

\begin{proof}
\cite[\S 4.10-7]{BNS}.
\end{proof}
Implementing the same argument we used in theorem \ref{GK-main} for a saturated family of submultiplicative seminorms $\mathcal{F}$
inducing $\tau$ and a quadratic module $M$, we have,
\[
\begin{array}{lcl}
	\cl{\tau}{M} & = & \bigcap_{\rho\in\mathcal{F}}\cl{\rho}{M}\\
		& = & \bigcap_{\rho\in\mathcal{F}}\pos{\Sp{\rho}{A}\cap\K{M}}\\
		& = & \pos{\bigcup_{\rho\in\mathcal{F}}\Sp{\rho}{A}\cap\K{M}}\\
		& = & \pos{\Sp{\tau}{A}\cap\K{M}},
\end{array}
\]
which proves the following:
\begin{thm}
Let $\tau$ be an lmc topology on $A$ and let $M$ be any quadratic module of $A$. Then $\cl{\tau}{M}=\pos{\Sp{\tau}{A}\cap\K{M}}$.
\end{thm}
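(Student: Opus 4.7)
The plan is to reduce the statement to the single-seminorm case already handled by Theorem \ref{GKM-main} and then glue the pieces together using the Gelfand-spectrum description of an lmc topology generated by a saturated family of seminorms.

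First I would invoke Theorem \ref{LMCTop} to choose a family $\mathcal{F}$ of submultiplicative seminorms generating $\tau$, and then replace $\mathcal{F}$ by its saturation (closing under finite maxima), which, as noted just before the statement, leaves $\tau$ unchanged. The reason for saturating is to be able to apply the proposition $\Sp{\tau}{A}=\bigcup_{\rho\in\mathcal{F}}\Sp{\rho}{A}$ at the end. Next I would use the general principle that the $\tau$-closure of any set equals the intersection of its closures with respect to each generating seminorm, so that
\[
\cl{\tau}{M}=\bigcap_{\rho\in\mathcal{F}}\cl{\rho}{M}.
\]

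For each fixed $\rho\in\mathcal{F}$, Theorem \ref{GKM-main} applies directly since $M$ is a quadratic module and $\rho$ is submultiplicative, giving $\cl{\rho}{M}=\pos{\Sp{\rho}{A}\cap\K{M}}$. Substituting this into the intersection and using the elementary identity $\bigcap_i\pos{Y_i}=\pos{\bigcup_i Y_i}$, I obtain
\[
\cl{\tau}{M}=\pos{\bigcup_{\rho\in\mathcal{F}}\Sp{\rho}{A}\cap\K{M}}=\pos{\left(\bigcup_{\rho\in\mathcal{F}}\Sp{\rho}{A}\right)\cap\K{M}}.
\]
Finally, invoking the saturated-family proposition $\Sp{\tau}{A}=\bigcup_{\rho\in\mathcal{F}}\Sp{\rho}{A}$ yields the desired equality.

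There is no real obstacle here: every nontrivial ingredient (the single-seminorm moment-type identity, the spectrum decomposition, and the ability to saturate) is already recorded in the preceding subsection. The only point that requires a bit of care is the justification that saturation is legitimate, i.e.\ that enlarging $\mathcal{F}$ by finite maxima preserves both submultiplicativity of each seminorm and the topology $\tau$; both are standard and were acknowledged in the paragraph preceding the relevant proposition. Thus the proof is essentially the chain of equalities displayed immediately before the theorem, made rigorous by citing Theorems \ref{LMCTop} and \ref{GKM-main} and the saturated-spectrum proposition.
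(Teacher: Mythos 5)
Your proposal is correct and follows essentially the same route as the paper: saturate the generating family of submultiplicative seminorms, write $\cl{\tau}{M}$ as the intersection of the $\rho$-closures, apply Theorem \ref{GKM-main} to each, and collapse the intersection of $\pos{\cdot}$ sets using the saturated-spectrum proposition. The paper's proof is exactly this chain of equalities, so no further comparison is needed.
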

\section{The seminorm induced by a cone}
\label{CnTop}
Fixing an Archimedean quadratic module $M$ of $A$, we associate a non-negative function $\norm{M}{}$ to $M$, defined on $A$. We
prove that it is in fact a submultiplicative seminorm on $A$ and study some of its basic properties, such as its relation to the
finest locally convex topology on $A$ in the following sections.
\begin{dfn}
Let $M$ be an Archimedean quadratic module of $A$. For every $a\in A$, define $\norm{M}{a}$ by
\[
	\norm{M}{a}:=\inf\{r\in\reals~:~r\pm a\in M\}.
\]
\end{dfn}
We will make use of the following well-known result of T. Jacobi to prove some basic properties of $\norm{M}{}$:
\begin{thm}[Jacobi]\label{Jacobi} Suppose $M$ is an Archimedean quadratic module of $A$. Then for each $a\in A$,
\[
	\hat{a}>0 ~ on ~ \K{M}\Rightarrow a\in M.
\]
\end{thm}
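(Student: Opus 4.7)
The plan is to argue by contradiction via a Hahn--Banach separation argument, following the classical Kadison--Dubois template. Suppose $a\notin M$. Since the Archimedean condition places $1$ in the algebraic interior of the convex cone $M$, the disjoint convex sets $M$ and $\{a\}$ can be separated in $A$ (viewed with the finest locally convex topology), producing a nonzero linear functional $\map{L}{A}{\reals}$ with $L(M)\subseteq\reals_{\ge 0}$ and $L(a)\le 0$.

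I would then show $L(1)>0$ and normalise $L(1)=1$. Indeed, $L(1)=0$ combined with the Archimedean bound $r\pm b\in M$ forces $rL(1)\pm L(b)\ge 0$, hence $L(b)=0$ for every $b\in A$, contradicting $L\ne 0$. The same estimate yields $|L(b)|\le\inf\{r\in\reals:r\pm b\in M\}$, so $L$ is automatically continuous for the seminorm $\norm{M}{\cdot}$ the author is about to introduce.

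The decisive step is to upgrade $L$ from a positive linear functional to an $\reals$-algebra homomorphism, thereby placing $L$ in $\K{M}$. Because $b^2\in M$ and $M$ is a quadratic module, $L$ is a positive state, so expanding $L((b+tc)^2)\ge 0$ in $t$ gives the Cauchy--Schwarz inequality $L(bc)^2\le L(b^2)L(c^2)$; applied to $c=1$ this yields $L(b)^2\le L(b^2)$. For the reverse inequality I would iterate Cauchy--Schwarz along the tower $b,b^2,b^4,\dots$ to obtain $L(b)^{2^n}\le L(b^{2^n})\le\inf\{r:r\pm b^{2^n}\in M\}$, and a spectral-radius-style passage to the limit in $n$ pins down $L(b^2)=L(b)^2$. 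Polarisation then supplies full multiplicativity $L(bc)=L(b)L(c)$, so $L\in\K{M}$. The hypothesis $\hat a>0$ on $\K{M}$ now forces $L(a)=\hat{a}(L)>0$, which contradicts $L(a)\le 0$.

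The principal obstacle is the multiplicativity step: turning a positive linear functional into an algebra homomorphism genuinely requires interleaving the quadratic module structure (which yields $b^2\in M$ and hence the Cauchy--Schwarz bound) with the Archimedean condition (which supplies the uniform boundedness needed for the spectral-radius limit). Once multiplicativity is in hand the rest of the argument is routine, but this interplay is the technical heart of Jacobi's result and is where the use of \emph{strict} positivity (as opposed to mere non-negativity) becomes indispensable.
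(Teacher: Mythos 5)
There is a genuine gap at what you yourself identify as the decisive step. (Note first that the paper offers no proof of this theorem at all --- it simply cites Jacobi \cite{J} --- so your attempt must stand against the standard arguments in the literature.) Your skeleton is the right functional-analytic one: Eidelheit separation of $a$ from $M$ using that the Archimedean property makes $1$ an interior point of $M$ for the finest locally convex topology, normalisation $L(1)=1$, production of a character in $\K{M}$ with a non-positive value at $a$, and contradiction with strict positivity. But the claim that the separating functional $L$ itself can be upgraded to an algebra homomorphism is false: a normalised functional non-negative on an Archimedean quadratic module is almost never multiplicative. Take $A=\reals[X]$, $M$ the Archimedean preordering with $\K{M}=[-1,1]$, and $L(f)=\tfrac{1}{2}\int_{-1}^{1}f(x)\,dx$; then $L(M)\subseteq\reals_{\ge0}$ and $L(1)=1$, yet $L(X)=0$ while $L(X^{2})=\tfrac{1}{3}$. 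Your iterated Cauchy--Schwarz chain $L(b)^{2^{n}}\le L(b^{2^{n}})\le\norm{M}{b}^{2^{n}}$ is correct, but after extracting $2^{n}$-th roots it only recovers the continuity estimate $|L(b)|\le\norm{M}{b}$; no passage to the limit can ``pin down'' $L(b^{2})=L(b)^{2}$, because that identity simply fails for the $L$ you constructed.

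What is missing is an extremal-point step. The set $K=\{L:\ L(M)\subseteq\reals_{\ge0},\ L(1)=1\}$ is convex and weak-$\ast$ compact (your continuity bound plus Banach--Alaoglu), the affine continuous map $L\mapsto L(a)$ attains its minimum on $K$ at an extreme point, and it is only the \emph{extreme points} of $K$ that are multiplicative. Establishing that extreme points are characters is the real technical heart --- for instance via the perturbation $L_{c}(x)=L(cx)/L(c)$, which is exactly the device this paper deploys in the Claim inside the proof of Theorem \ref{sn-cls}, or via a GNS-type construction. With that step inserted, your separation argument does produce $\alpha\in\K{M}$ with $\alpha(a)\le0$, and strict positivity of $\hat{a}$ on $\K{M}$ gives the contradiction, yielding $a\in M$ itself (not merely its closure) because the separation was performed against $M$ with nonempty interior. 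Jacobi's own published proof proceeds by a more algebraic route (extending $M$ by Zorn's lemma and analysing maximal quadratic modules); either path is legitimate, but both require precisely the machinery that your sketch replaces with a false multiplicativity claim.
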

\begin{proof}
See \cite[Theorem 4]{J}.
\end{proof}
\begin{prop}\label{non-neg}
For all $a\in A$, $\norm{M}{a}\ge0$.
\end{prop}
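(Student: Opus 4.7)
The plan is to show that every $r \in \reals$ appearing in the defining set $\{r : r \pm a \in M\}$ must already be non-negative, from which $\norm{M}{a} \geq 0$ follows immediately by taking infimum. The Archimedean hypothesis enters only to guarantee that this set is non-empty, so that $\norm{M}{a}$ is a finite real rather than $+\infty$.

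To carry out the reduction, I would exploit the algebraic structure of $M$ directly. From $r+a \in M$ and $r-a \in M$, additive closure yields $(r+a)+(r-a)=2r\in M$, and scaling by $\tfrac{1}{2}\in\reals_{\ge0}$ via the cone axiom gives $r\cdot 1\in M$. So the claim reduces to showing that no negative real multiple of $1$ lies in $M$. If $-s\cdot 1\in M$ for some $s>0$, then $\tfrac{1}{s}\in\reals_{\ge0}$ together with the cone property would produce $-1\in M$, forcing $M=A$.

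A slicker alternative uses Jacobi's theorem, which has just been recalled: for a proper Archimedean quadratic module, $\K{M}$ is non-empty---otherwise $\hat{a}>0$ would hold vacuously on $\K{M}$ for every $a\in A$, and Jacobi would force $M=A$. Picking any witness $\alpha\in\K{M}$ and evaluating $r\pm a\in M$ gives $r\pm\alpha(a)\ge0$, hence $r\ge|\alpha(a)|\ge0$ in a single line. The main obstacle in either route is the implicit hypothesis that $M$ is proper: in the degenerate case $M=A$ the defining set is all of $\reals$ and $\norm{M}{a}=-\infty$, so this case must be tacitly excluded for the statement to make sense, as is natural in the sequel where $\norm{M}{}$ is to be shown to be a seminorm.
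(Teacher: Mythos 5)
Your proposal is correct, and your first route is genuinely different from --- and more elementary than --- the paper's argument. The paper splits into cases according to whether $a\in M$, $-a\in M$, or neither, and in each case invokes Jacobi's theorem (Theorem \ref{Jacobi}) in contrapositive form to produce characters $\alpha\in\K{M}$ with $\alpha(\pm a)\le 0$, thereby showing the set $\{r : r\pm a\in M\}$ is bounded below by $0$. Your algebraic route bypasses Jacobi and the case analysis entirely: from $r\pm a\in M$ you get $2r\in M$, hence $r\cdot 1\in M$, and $r<0$ would give $-1\in M$ and so $M=A$ (the fact that $M$ is proper iff $-1\notin M$ is recorded in Section \ref{KTop-LMC}). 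This proves the stronger statement that \emph{every} admissible $r$ is non-negative, isolates the Archimedean hypothesis as needed only for non-emptiness of the defining set, and works verbatim for any proper quadratic module. Your second route (evaluate $r\pm a$ at any $\alpha\in\K{M}$ to get $r\ge|\alpha(a)|$) is essentially a streamlined, case-free version of what the paper does. Your observation about the degenerate case $M=A$ is also apt: the statement fails there ($\norm{M}{a}=-\infty$), and the paper's own proof tacitly excludes it as well, since for $M=A$ one has $\K{M}=\emptyset$ and the Jacobi-based deductions ("there exists $\alpha\in\K{M}$ with $\alpha(a)\le0$") collapse; properness should be read as a standing assumption throughout.
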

\begin{proof}
There are two possible cases:

\textit{Case 1}: $a\in M$ or $-a\in M$. Without loss of generality, assume that $a\in M$. Then for all $\alpha\in\K{M}$,
$\hat{a}(\alpha)\ge0$. Therefore, for any real number $r>0$, $\widehat{r+a}>0$ on $\K{M}$ and hence by theorem \ref{Jacobi}, 
$r+a\in M$. 

If also $-a\in M$, then $\alpha(a)=0$ for all $\alpha\in\K{M}$. Thus if $r<0$, then $r\pm a\not\in M$ and 
$\norm{M}{a}=0$. 

Suppose that $-a\not\in M$. Then, there exists $\alpha\in\K{M}$ such that $\alpha(-a)\leq0$. Therefore 
$r-a\not\in M$ for any $r\leq0$ and hence $\norm{M}{a}>0$.

\textit{Case 2}: $\pm a\not\in M$. By theorem \ref{Jacobi}, there exist $\alpha,\beta\in\K{M}$ such that $\alpha(a)\leq0$ and
$\beta(-a)\leq0$. Therefore, $\alpha(r+a),\beta(r-a)\leq0$ for every $r\leq0$ which implies that $r\pm a\not\in M$. So the set
$\{r\in\reals:r\pm a\in M\}$ is bounded below by $0$. Hence $\norm{M}{a}$ exists and is non-negative.
\end{proof}
In fact $\norm{M}{}$ induces a submultiplicative seminorm on $A$ (A seminorm $\rho$ on $A$ is said to be submultiplicative if 
$\rho(a\cdot b)\leq\rho(a)\rho(b)$ for all $a,b\in A$).
\begin{prop}\label{smsn}
$\norm{M}{}$ is a submultiplicative seminorm on $A$.
\end{prop}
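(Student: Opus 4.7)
The plan is to verify the four axioms (nonnegativity, vanishing at $0$, absolute homogeneity, triangle inequality, and submultiplicativity) using only elementary properties of quadratic modules for the first four, and invoking Jacobi's theorem (Theorem \ref{Jacobi}) for the submultiplicativity. Nonnegativity is already handed to us by Proposition \ref{non-neg}, and $\norm{M}{0}=0$ is immediate since $r \pm 0 = r \cdot 1 \in \reals_{\geq 0}\cdot M \subseteq M$ for every $r \geq 0$.

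For homogeneity, given $r > 0$, the cone property $\reals_{\geq 0}\cdot M \subseteq M$ gives the equivalence $s \pm (ra) \in M \iff s/r \pm a \in M$, so substituting $t = s/r$ shows $\norm{M}{ra} = r\norm{M}{a}$. The definition is symmetric in the sign of $a$, so $\norm{M}{-a} = \norm{M}{a}$, which handles $r < 0$. The triangle inequality is equally direct: if $r \pm a \in M$ and $s \pm b \in M$, then $(r+s) \pm (a+b) = (r \pm a) + (s \pm b) \in M$ because $M + M \subseteq M$, giving $\norm{M}{a+b} \leq r + s$ and the result follows after taking the infimum over such $r,s$.

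The main obstacle is submultiplicativity, because for a general quadratic module $M$ one does not have $M\cdot M\subseteq M$, so I cannot directly multiply the witnesses $r \pm a, s \pm b \in M$. The trick is to pass to $\K{M}$ and invoke Jacobi. Fix $r > \norm{M}{a}$ and $s > \norm{M}{b}$; then $r \pm a, s \pm b \in M$, so evaluating at any $\alpha \in \K{M}$ yields $|\hat{a}(\alpha)| \leq r$ and $|\hat{b}(\alpha)| \leq s$, hence
\[
	|\widehat{ab}(\alpha)| = |\hat{a}(\alpha)\hat{b}(\alpha)| \leq rs.
\]
Therefore, for every $\epsilon > 0$, the element $(rs+\epsilon)\pm ab$ is strictly positive on $\K{M}$, and by Theorem \ref{Jacobi} both $(rs+\epsilon)\pm ab$ lie in $M$. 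This gives $\norm{M}{ab} \leq rs + \epsilon$; letting $\epsilon \downarrow 0$ and then $r \downarrow \norm{M}{a}$, $s \downarrow \norm{M}{b}$ yields $\norm{M}{ab} \leq \norm{M}{a}\norm{M}{b}$.

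A small point to handle along the way is making sure the infimum in the definition is well-behaved: since $M$ contains $\reals_{\geq 0}$, whenever $r \pm a \in M$ we also have $(r+\delta) \pm a \in M$ for every $\delta \geq 0$, so $\{r : r \pm a \in M\}$ is an upward-closed subset of $[0,\infty)$, justifying the approximation arguments above.
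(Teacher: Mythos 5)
Your proof is correct and follows essentially the same route as the paper: the first three axioms by direct manipulation of the cone/quadratic-module properties, and submultiplicativity by evaluating the witnesses $r\pm a$, $s\pm b$ on $\K{M}$ and invoking Jacobi's theorem to pull $(rs+\epsilon)\pm ab$ back into $M$. Your explicit remark that $\{r : r\pm a\in M\}$ is upward closed (since $\reals_{\ge0}\subseteq M$ and $M+M\subseteq M$) is a small point the paper uses implicitly, so nothing further is needed.
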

\begin{proof}
By proposition \ref{non-neg}, $\norm{M}{0}=0$ and the range of $\norm{M}{}$ consists of non-negative real numbers.
\begin{itemize}
\item[(1)]{
$\forall\lambda\in\reals\ \forall a\in A\quad\norm{M}{\lambda a}=|\lambda|\norm{M}{a}$:

Clearly for $\lambda=0$, $\norm{M}{\lambda a}=\norm{M}{0}=0\times\norm{M}{a}=0$. Suppose that $\lambda\neq0$. Then
\[
\begin{array}{lcl}
	\{r:r\pm\lambda a\in M\} & = & \{r:|\lambda|(\frac{r}{|\lambda|}\pm a)\in M\}\\
	 & = & \{r:\frac{r}{|\lambda|}\pm a\in\frac{1}{|\lambda|}M\}\\
	 & = & |\lambda|\{r:r\pm a\in M\}.
\end{array}
\]
So, $\norm{M}{\lambda a}=|\lambda|\norm{M}{a}$.
}
\item[(2)]{
$\forall a,b\in A\quad\norm{M}{a+b}\leq\norm{M}{a}+\norm{M}{b}$:

For any $\epsilon>0$ and $a,b\in A$, we have
\[
	\norm{M}{a}+\norm{M}{b}+\epsilon\in\{r:r\pm(a+b)\in M\}.
\]
Hence, $\sup\{r:r\pm(a+b)\in M\}\leq\norm{M}{a}+\norm{M}{b}+\epsilon$, for every $\epsilon>0$.
So, $\norm{M}{a+b}\leq\norm{M}{a}+\norm{M}{b}$.
}
\item[(3)]{
$\forall a,b\in A\quad\norm{M}{ab}\leq\norm{M}{a}\norm{M}{b}$:

For any $\epsilon>0$, $(\norm{M}{a}+\epsilon)\pm a,(\norm{M}{b}+\epsilon)\pm b\in M$. Therefore $|\alpha(a)|<\norm{M}{a}+\epsilon$ 
and $|\alpha(b)|<\norm{M}{b}+\epsilon$ for all $\alpha\in\K{M}$. So 
\[
	|\alpha(ab)|<(\norm{M}{a}+\epsilon)(\norm{M}{b}+\epsilon),
\]
and hence $(\norm{M}{a}+\epsilon)(\norm{M}{b}+\epsilon)\pm ab\in M$ by \ref{Jacobi}. Therefore 
$\norm{M}{ab}\leq(\norm{M}{a}+\epsilon)(\norm{M}{b}+\epsilon)$ for any $\epsilon>0$. Letting $\epsilon\longrightarrow0$, we get
\[
	\norm{M}{ab}\leq\norm{M}{a}\norm{M}{b},
\]
as desired.
}
\end{itemize}
\end{proof}
Proposition \ref{smsn}, simply asserts that if $M\subseteq A$ is an Archimedean quadratic module then the pair $(A,\norm{M}{})$ 
is a seminormed $\reals$-algebra.
\section{Closure with respect to the induced seminorm}
\label{ClCnTop}
In this section we study the relation between $\norm{M}{}$-continuity and positivity of linear functionals on $M$, when $M$ is an
Archimedean quadratic module. Then we find the $\norm{M}{}$-closure of $M$. This gives a  solution for integral representability of 
positive semidefinite $\norm{M}{}$-continuous functionals.
\begin{thm}\label{psd-cnt}
Let $M$ be an Archimedean quadratic module. If a linear functional $\map{L}{A}{\reals}$ is non-negative on $M$, then $L$ is
$\norm{M}{}$-continuous.
\end{thm}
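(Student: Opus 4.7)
The plan is to reduce continuity of $L$ to a bound of the form $|L(a)|\leq C\,\norm{M}{a}$ for all $a\in A$, which is the standard boundedness criterion for linear functionals on a seminormed space. Once such a bound is established (with $C$ a finite non-negative constant), $L$ is automatically $\norm{M}{}$-continuous at $0$, and hence everywhere by linearity.

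To produce the bound, I would exploit the very definition of $\norm{M}{a}$. Fix $a\in A$ and $\epsilon>0$. By definition of the infimum, the real number $r:=\norm{M}{a}+\epsilon$ satisfies $r\pm a\in M$. Applying the positive functional $L$ to both $r+a$ and $r-a$ gives
\[
	rL(1)+L(a)\ge0,\qquad rL(1)-L(a)\ge0,
\]
which together yield $|L(a)|\leq rL(1)=(\norm{M}{a}+\epsilon)L(1)$. Since $1\in M$ (because $M$ is a quadratic module), $L(1)\geq 0$, so the constant $C:=L(1)$ is non-negative. Sending $\epsilon\to 0^+$ yields $|L(a)|\leq L(1)\norm{M}{a}$ for every $a\in A$.

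This last inequality is exactly the boundedness condition, so $L$ is $\norm{M}{}$-continuous, with operator seminorm at most $L(1)$. I do not foresee a significant obstacle here: the Archimedean hypothesis is used only implicitly to guarantee (via Proposition \ref{non-neg}) that $\norm{M}{a}$ is a finite non-negative real number for every $a\in A$, so that the estimate above makes sense for all $a$. The only subtle point worth making explicit in the write-up is the observation that $1\in M$ forces $L(1)\geq 0$, which supplies a concrete, finite continuity constant.
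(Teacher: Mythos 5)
Your proof is correct and follows essentially the same route as the paper's: apply the positive functional $L$ to $r\pm a\in M$ to get $|L(a)|\le rL(1)$, then pass to the infimum over such $r$ to obtain $|L(a)|\le L(1)\,\norm{M}{a}$. Your explicit remarks that $1\in M$ forces $L(1)\ge 0$ and that the Archimedean hypothesis guarantees finiteness of $\norm{M}{a}$ are harmless elaborations of what the paper leaves implicit.
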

\begin{proof}
Since $M$ is Archimedean, for every $a\in A$, there exists $r\ge0$ such that $r\pm a\in M$. By positivity of $L$ on $M$, we have 
$L(r\pm a)\ge0$. So, $|L(a)|\leq L(r)=rL(1)$ which by definition means $\norm{M}{a}\leq r$. Therefore, for every $a\in A$ we have
$|L(a)|\leq L(1)\cdot\norm{M}{a}$ and so, $L$ is $\norm{M}{}$-continuous.
\end{proof}
\begin{rem}\label{ddual}
Let $\tau$ be a locally convex topology on $A$ and $C\subseteq A$ be a cone in $A$. Let
\[
	C_{\tau}^{\vee}:=\{\map{L}{A}{\reals}~:~L\textrm{ is }\tau\textrm{-continuous and }L(C)\subseteq\reals_{\ge0}\},
\]
and
\[
	C_{\tau}^{\vee\vee}:=\{a\in A~:~L(a)\ge0\quad\forall L\in C_{\tau}^{\vee}\}.
\]
One can show that $C_{\tau}^{\vee\vee}=\cl{\tau}{C}$:

If $b\not\in\cl{\tau}{C}$, then there exists a convex open set $O$ containing $b$ and disjoint from $C$. By Banach separation 
theorem, there exists $L\in C_{\tau}^{\vee}$ such that $L<0$ on $O$. Hence $b\not\in C_{\tau}^{\vee\vee}$ which proves 
$C_{\tau}^{\vee\vee}\subseteq\cl{\tau}{C}$. The reverse inclusion is clear. Note that $C\subseteq C_{\tau}^{\vee\vee}$, 
therefore $\cl{\tau}{C}\subseteq \cl{\tau}{C_{\tau}^{\vee\vee}}$ and
\[
	C_{\tau}^{\vee\vee}=\bigcap_{L\in C_{\tau}^{\vee}}L^{-1}(\reals_{\ge0}),
\]
is $\tau$-closed; i.e., $\cl{\tau}{C_{\tau}^{\vee\vee}}=C_{\tau}^{\vee\vee}$.
\end{rem}
\begin{crl}\label{sp-normM}
Let $M$ be an Archimedean quadratic module and $\varphi$ be the finest locally convex topology on $A$. 
Then $\cl{\varphi}{M}=\cl{\norm{M}{}}{M}$.
\end{crl}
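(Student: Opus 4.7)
The plan is to reduce both closures to their bidual descriptions via Remark \ref{ddual}, and then show that the one-sided duals $M^{\vee}_{\varphi}$ and $M^{\vee}_{\norm{M}{}}$ actually coincide as sets of linear functionals.

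First I would note that since $\varphi$ is the finest locally convex topology on $A$, every linear map $\map{L}{A}{\reals}$ is $\varphi$-continuous. In particular,
\[
    M^{\vee}_{\varphi} = \{\map{L}{A}{\reals} \text{ linear} : L(M) \subseteq \reals_{\ge 0}\}.
\]
On the other hand, $M^{\vee}_{\norm{M}{}}$ consists of those such $L$ which are additionally $\norm{M}{}$-continuous, so trivially $M^{\vee}_{\norm{M}{}} \subseteq M^{\vee}_{\varphi}$.

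The reverse inclusion $M^{\vee}_{\varphi} \subseteq M^{\vee}_{\norm{M}{}}$ is exactly the content of Theorem \ref{psd-cnt}: any linear functional non-negative on the Archimedean quadratic module $M$ is automatically $\norm{M}{}$-continuous (with operator bound $L(1)$). Combining both inclusions yields $M^{\vee}_{\varphi} = M^{\vee}_{\norm{M}{}}$, and therefore also $M^{\vee\vee}_{\varphi} = M^{\vee\vee}_{\norm{M}{}}$.

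Finally, applying Remark \ref{ddual} to the cone $C = M$ for each of the two topologies $\tau = \varphi$ and $\tau = \norm{M}{}$ identifies each side of the desired equality with the corresponding bidual:
\[
    \cl{\varphi}{M} = M^{\vee\vee}_{\varphi} = M^{\vee\vee}_{\norm{M}{}} = \cl{\norm{M}{}}{M}.
\]
There is essentially no obstacle here beyond invoking Theorem \ref{psd-cnt}; the only thing to double-check is that Remark \ref{ddual} is legitimately applicable to $\varphi$ (which requires $\varphi$ to be a Hausdorff locally convex topology so that Banach separation applies), but this is standard for the finest locally convex topology on a real vector space.
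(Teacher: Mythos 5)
Your proposal is correct and follows essentially the same route as the paper's own proof: identify $M^{\vee}_{\varphi}$ with the set of all linear functionals non-negative on $M$, use Theorem \ref{psd-cnt} to conclude $M^{\vee}_{\varphi}=M^{\vee}_{\norm{M}{}}$, and then pass to biduals via Remark \ref{ddual}. The extra caution about the applicability of the separation argument for $\varphi$ is a reasonable aside but does not change the argument.
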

\begin{proof}
Since $\varphi$ is the finest locally convex topology on $A$, every linear functional on $A$ is $\varphi$-continuous. 
By definition $M_{\varphi}^{\vee}=\{\map{L}{A}{\reals}:~L(M)\subseteq\reals_{\ge0}\}$. Since $M$ is Archimedean, by theorem
\ref{psd-cnt}, $L(M)\subseteq\reals_{\ge0}$ implies that $L$ is $\norm{M}{}$-continuous, so $M_{\varphi}^{\vee}=M_{\norm{M}{}}^{\vee}$. 
Therefore, $M_{\varphi}^{\vee\vee}=M_{\norm{M}{}}^{\vee\vee}$ and by applying Remark \ref{ddual}, we get $\cl{\varphi}{M}=\cl{\norm{M}{}}{M}$.
\end{proof}
\begin{thm}\label{sn-cls}
Let $M$ be an Archimedean quadratic module of $A$ and $T$ a cone such that $\K{M}\cap\K{T}\neq\emptyset$. Then
\[
	\cl{\norm{M}{}}{T}=\Psd{}{\K{M}\cap\K{T}}.
\]
\end{thm}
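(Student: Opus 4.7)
The plan is to prove both inclusions, starting from the auxiliary identification $\Sp{\norm{M}{}}{A}=\K{M}$. For $\alpha\in\K{M}$ and $r>\norm{M}{a}$ one has $r\pm a\in M$, so $\alpha(r\pm a)\ge 0$ and $|\alpha(a)|\le\norm{M}{a}$, giving $\alpha\in\Sp{\norm{M}{}}{A}$; conversely a $\norm{M}{}$-continuous $\alpha\in\X{A}$ satisfies $|\alpha(a)|\le\norm{M}{a}$ by the standard $n$-th root trick applied to $a^{n}$, and the Archimedean property of $M$ then forces $\alpha(M)\subseteq\reals_{\ge 0}$. With this in hand, the inclusion $\cl{\norm{M}{}}{T}\subseteq\Psd{}{\K{M}\cap\K{T}}$ is immediate: for $(t_n)\subseteq T$ with $\norm{M}{a-t_n}\to 0$ and $\alpha\in\K{M}\cap\K{T}$, the character $\alpha$ is $\norm{M}{}$-continuous and $\alpha(t_n)\ge 0$, so $\alpha(a)=\lim_n\alpha(t_n)\ge 0$.

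For the reverse inclusion I would pass to the quadratic module generated by $M\cup T$,
\[
  N:=M+\sum A^{2}\cdot T,
\]
whose elements are sums of things in $M$ together with products $\sigma t$ with $\sigma\in\sum A^{2}$ and $t\in T$. A direct check shows $N$ is a quadratic module (using $\sum A^{2}\cdot M\subseteq M$ and closure of $\sum A^{2}$ under multiplication), and $N\supseteq M$ makes $N$ Archimedean. Multiplicativity of characters yields $\K{N}=\K{M}\cap\K{T}$: if $\alpha$ is non-negative on $M$ and on every $t\in T$, then $\alpha(\sigma t)=\alpha(\sigma)\alpha(t)\ge 0$. Applying Theorem~\ref{GKM-main} with $\rho=\norm{M}{}$ to the quadratic module $N$, and using the preceding identification of $\Sp{\norm{M}{}}{A}$,
\[
  \cl{\norm{M}{}}{N}=\Psd{}{\K{N}\cap\K{M}}=\Psd{}{\K{M}\cap\K{T}}.
\]

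It remains to identify $\cl{\norm{M}{}}{T}$ with $\cl{\norm{M}{}}{N}$. The inclusion $\subseteq$ is clear from $T\subseteq N$, while the reverse is the main obstacle: each generator $\sigma t$ of $N$ with $\sigma\in\sum A^{2}$, $t\in T$, and each element $m\in M$, must be $\norm{M}{}$-approximated by elements of $T$. I would approach this dually via Remark~\ref{ddual}, showing that every $\norm{M}{}$-continuous $L$ with $L(T)\subseteq\reals_{\ge 0}$ satisfies $L(N)\subseteq\reals_{\ge 0}$, whence $T^{\vee\vee}_{\norm{M}{}}\supseteq\cl{\norm{M}{}}{N}$. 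Since $\norm{M}{a}=\sup_{\alpha\in\K{M}}|\hat{a}(\alpha)|$ (an immediate consequence of the definition and Jacobi's theorem), such an $L$ extends to a bounded functional on $C(\K{M})$, and Riesz produces a signed Radon measure on the compact set $\K{M}$. The delicate step I anticipate is exploiting $L(T)\subseteq\reals_{\ge 0}$ together with the hypothesis $\K{M}\cap\K{T}\ne\emptyset$ to pin the representing measure down to be positive and supported in $\K{M}\cap\K{T}$, since $T$ being merely a cone (rather than a quadratic module) places this outside the direct reach of the Haviland/Jacobi machinery and is where most of the work will lie.
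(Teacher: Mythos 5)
Your first inclusion, the identification $\Sp{\norm{M}{}}{A}=\K{M}$, and the reduction to the quadratic module $N=M+\sum A^2\cdot T$ are all sound: $N$ is indeed an Archimedean quadratic module with $\K{N}=\K{M}\cap\K{T}$, and Theorem \ref{GKM-main} does give $\cl{\norm{M}{}}{N}=\Psd{}{\K{M}\cap\K{T}}$. The gap is in the final step. The implication you propose to prove --- that every $\norm{M}{}$-continuous $L$ with $L(T)\subseteq\reals_{\ge0}$ satisfies $L(N)\subseteq\reals_{\ge0}$ --- is false. Take $A=\reals[X]$, $M$ the quadratic module generated by $1-X^2$ (Archimedean, with $\K{M}=[-1,1]$), $T=\reals_{\ge0}\cdot X$ (so $\K{M}\cap\K{T}=[0,1]\neq\emptyset$), and $L(f)=f(\tfrac12)-f(-\tfrac14)$. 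Then $|L(f)|\le 2\sup_{[-1,1]}|f|\le 2\norm{M}{f}$, so $L$ is $\norm{M}{}$-continuous, and $L(rX)=\tfrac34 r\ge0$ for $r\ge0$, so $L\in T^{\vee}_{\norm{M}{}}$; but $L\bigl((X-1)^2\bigr)=\tfrac14-\tfrac{25}{16}<0$ while $(X-1)^2\in\sum A^2\subseteq M\subseteq N$. The same example shows that the signed Radon measure representing such an $L$ on $\K{M}$ genuinely need not be positive, so the ``delicate step'' you flag at the end --- forcing positivity and support in $\K{M}\cap\K{T}$ from $L(T)\subseteq\reals_{\ge0}$ alone --- is not merely delicate but impossible. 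What is true is only $T^{\vee\vee}_{\norm{M}{}}=N^{\vee\vee}_{\norm{M}{}}$ (which is equivalent to the theorem itself), not $T^{\vee}_{\norm{M}{}}\subseteq N^{\vee}_{\norm{M}{}}$; your dual argument needs the stronger, false, statement about single duals.

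The paper's proof supplies exactly the ingredient your plan is missing. It works with the normalized dual cone $T^{\vee}_{\norm{M}{},1}=\{L\in T^{\vee}_{\norm{M}{}}: L(1)=1\}$, which is weak-$\ast$ compact by Banach--Alaoglu, and shows by a direct computation with the functionals $L_c(x)=L(cx)/L(c)$ that every \emph{extreme point} of this set is multiplicative, hence lies in $\K{M}\cap\K{T}$. Krein--Milman then exhibits $T^{\vee}_{\norm{M}{},1}$ as the weak-$\ast$ closed convex hull of $\K{M}\cap\K{T}$, so any $b\in\Psd{}{\K{M}\cap\K{T}}$ is non-negative on all of $T^{\vee}_{\norm{M}{},1}$ and cannot be separated from $T$. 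In other words, positivity on a bare cone $T$ controls only the extreme rays of its dual cone, not its general elements (as the example above shows), and the extremality argument is what bridges that gap.
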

\begin{proof}
($\subseteq$)
Clearly $T\subseteq\Psd{}{\K{T}}\subseteq\Psd{}{\K{M}\cap\K{T}}$. Since every $\alpha\in\K{T}\cap\K{M}$ is $\norm{M}{}$-continuous 
and 
\[
	\Psd{}{\K{M}\cap\K{T}}=\bigcap_{\alpha\in\K{M}\cap\K{T}}\alpha^{-1}(\reals_{\ge0}),
\]
we see that $\Psd{}{\K{M}\cap\K{T}}$ is $\norm{M}{}$-closed. Therefore $\cl{\norm{M}{}}{T}\subseteq\Psd{}{\K{M}\cap\K{T}}$.

($\supseteq$)
Let $A_1^{\ast}=\{L\in A^{\ast}:L(1)=1\}$, where $A^{\ast}$ is the dual of $(A,\norm{M}{})$ equipped with weak-$\ast$ topology. 
The set $T_{\norm{M}{},1}^{\vee}:=A_1^{\ast}\cap T_{\norm{M}{}}^{\vee}$ is a convex closed subset of a bounded closed ball of 
$A^{\ast}$, which is compact by Banach-Alaoglu theorem. By Krein-Milman theorem, $T_{\norm{M}{},1}^{\vee}$ is the weak-$\ast$ 
closure of convex hull of its extreme points.

\textit{\textbf{Claim.}} If $L$ is an extreme point of $T_{\norm{M}{},1}^{\vee}$, then $L\in\K{T}\cap\K{M}$.

\textit{Proof of the Claim:} For any $c\in A$ such that $L(c)\neq0$ the map defined by $L_c(x)=\frac{L(cx)}{L(c)}$ is a
$\norm{M}{}$-continuous linear functional and $L_c(1)=1$. If $L\in T_{\norm{M}{},1}^{\vee}$ and $L(a),L(b)>0$, then
\begin{equation}\label{cvx-cmb}
	L(a)\cdot L_a(x)+L(b)\cdot L_b(x)=L(a+b)\cdot L_{a+b}(x).
\end{equation}
Therefore $L_{a+b}=(\frac{L(a)}{L(a+b)})L_a+(\frac{L(b)}{L(a+b)})L_b$ is a convex combination of $L_a$ and $L_b$. Since $L$ is
continuous, there exists $C>0$ such that for all $x\in A$, $|L(x)|\leq C\norm{M}{x}$. Therefore, for some $N>0$, $L(N+x), L(N-x)>0$.
Rewriting \eqref{cvx-cmb} for $N\pm x$, yields:
\[
	L=L_{2N}=\lambda L_{N+x}+(1-\lambda)L_{N-x}.
\]
By assumption, $L$ is an extreme point, thus $L=L_{N+x}$. So for every $y\in A$,
\[
	L(y)=L_{N+x}(y)=\frac{L(N\cdot y)+L(xy)}{L(N+x)}=\frac{N\cdot L(y)+L(xy)}{N+L(x)}.
\]
Hence $N\cdot L(y)+L(x)L(y)=N\cdot L(y)+L(xy)$ or $L(xy)=L(x)L(y)$, as claimed.

Denoting by $\mbox{cov}(\K{T}\cap\K{M})$, the convex hull of $\K{T}\cap\K{M}$ in $T_{\norm{M}{},1}^{\vee}$, the claim simply 
states that the weak-$\ast$ closure of $\mbox{cov}(\K{T}\cap\K{M})$ is equal to $T_{\norm{M}{},1}^{\vee}$.

Now to show the reverse inclusion, take $a\not\in\cl{\norm{M}{}}{T}$. Then, there exists $L\in T_{\norm{M}{}}^{\vee}$ such $L(a)<0$. 
Replacing $L$ with $\frac{1}{L(1)}L$ if necessary, we can assume that $L\in T_{\norm{M}{},1}^{\vee}$.
Note that every element $a\in A$ defines a continuous functional $\map{\hat{a}}{A^{\ast}}{\reals}$ which $L\mapsto L(a)$.
If $b\in\Psd{}{\K{T}\cap\K{M}}$, then $\hat{b}\ge0$ on $\K{T}\cap\K{M}$. By continuity, $\hat{b}\ge0$ on 
$\cl{\ast}{\mbox{cov}(\K{T}\cap\K{M})}=T_{\norm{M}{},1}^{\vee}$. Therefore
\[
	a\not\in\cl{\norm{M}{}}{T}\Rightarrow a\not\in\Psd{}{\K{T}\cap\K{M}},
\]
or equivalently, $\Psd{}{\K{T}\cap\K{M}}\subseteq\cl{\norm{M}{}}{T}$. This completes the proof.
\end{proof}
\begin{rem}
In the theorem \ref{sn-cls}, if we assume that $T$ is a quadratic module which is a richer structure, then the proof could be 
simplified by directly applying theorem \ref{GKM-main}. We only need to note that $\Sp{\norm{M}{}}{A}=\K{M}$ which is proved 
implicitly in corollary \ref{sp-normM}.
\end{rem}
\section{Non-Archimedean cones}
In this section we consider the case where the quadratic module $M$ is not Archimedean, but $\K{M}\neq\emptyset$.
The purpose of this section is to define a topology $\T{M}$ on $A$ such that \eqref{GenMntEq} holds.
\begin{lemma}\label{ds-lemma}
Let $M_1,M_2\subseteq A$ be Archimedean quadratic modules. Then
\begin{enumerate}
	\item{
	$M_1\cap M_2$ is Archimedean;
	}
	\item{
	If $M_1\subseteq M_2$ then the identity map $\map{\iota}{(A,\norm{M_1}{})}{(A,\norm{M_2}{})}$ is continuous.
	}
\end{enumerate}
\end{lemma}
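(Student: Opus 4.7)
The plan is straightforward: both assertions reduce to direct unwinding of the definitions of Archimedean quadratic module and of $\norm{M}{}$, so there is no substantive obstacle. I would treat the two parts separately but in parallel style, aiming in each case to exhibit the witness explicitly.

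For part (1), I would first observe that $M_1 \cap M_2$ is again a quadratic module, since each defining condition ($0,1 \in M_i$, $M_i+M_i \subseteq M_i$, $\reals_{\ge 0}\cdot M_i \subseteq M_i$, and $\sum A^2 \cdot M_i \subseteq M_i$) is obviously closed under intersection. To verify Archimedeanness of $M_1 \cap M_2$ at an arbitrary $a \in A$, I would pick $r_i \in \reals_{\ge 0}$ with $r_i \pm a \in M_i$ (available by the Archimedean hypothesis on each $M_i$) and set $r := \max\{r_1,r_2\}$. Writing
\[
r \pm a = (r - r_i) + (r_i \pm a),
\]
the first summand is a non-negative real scalar, which lies in $M_i$ because $1 \in M_i$ and $\reals_{\ge 0}\cdot M_i \subseteq M_i$, while the second lies in $M_i$ by choice of $r_i$. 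Hence $r \pm a \in M_i$ for $i = 1,2$, i.e., $r \pm a \in M_1 \cap M_2$, as required.

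For part (2), continuity of the identity map between two seminormed $\reals$-algebras is equivalent to the existence of a constant $C > 0$ with $\norm{M_2}{a} \le C \cdot \norm{M_1}{a}$ for all $a \in A$. I would argue that in fact $C = 1$ suffices: by the inclusion $M_1 \subseteq M_2$, every scalar $r$ witnessing $r \pm a \in M_1$ automatically satisfies $r \pm a \in M_2$, so the defining set for $\norm{M_1}{a}$ is contained in that for $\norm{M_2}{a}$. Taking infima reverses inclusion and yields $\norm{M_2}{a} \le \norm{M_1}{a}$ directly.

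Neither step poses a genuine difficulty; the only detail worth flagging is that the Archimedean hypothesis on each $M_i$ ensures the infimum defining $\norm{M_i}{a}$ is taken over a non-empty subset of $\reals$, so that the inequalities in both parts are not vacuous and the seminorms $\norm{M_1}{}$, $\norm{M_2}{}$ are genuinely defined as required by Proposition \ref{smsn}.
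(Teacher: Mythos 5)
Your proof is correct and takes essentially the same route as the paper: part (1) via $r=\max\{r_1,r_2\}$ and part (2) via the inclusion of the defining sets of scalars, which gives $\norm{M_2}{}\le\norm{M_1}{}$ pointwise. You merely spell out the detail the paper leaves implicit, namely that $r\pm a=(r-r_i)\cdot 1+(r_i\pm a)\in M_i$ because $1\in M_i$ and $M_i$ is a cone.
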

\begin{proof}
(1) Since $M_1$ and $M_2$ are Archimedean, for every $a\in A$ there exist $r_1,r_2\in\reals$ such that $r_1\pm a\in M_1$ and
$r_2\pm a\in M_2$. Take $r=\max(r_1, r_2)$, we have $r\pm a\in M_1\cap M_2$. 

(2) If $M_1\subseteq M_2$ then for all $a\in A$ we have $\norm{M_2}{a}\leq\norm{M_1}{a}$. 
Thus the identity map $\map{\iota}{(A,\norm{M_1}{})}{(A,\norm{M_2}{})}$ is continuous.
\end{proof}
Suppose that $\K{M}\neq\emptyset$, then there always exists an Archimedean quadratic module $M'$ containing $M$; 
take $\alpha\in\K{M}$ and let 
\[
	M':=\{a\in A~:~\hat{a}(\alpha)\ge0\}.
\]
Clearly $M'$ is Archimedean (proof: $|\hat{a}(\alpha)|\pm a\ge0$ on $\{\alpha\}=\K{M'}$) and $M\subseteq M'$.
Let 
\[
	\arch{M}:=\{T~:~T\textrm{ is Archimedean quadratic module and }M\subseteq T\},
\]
then we can prove the following:
\begin{lemma}
The family $\{(A,\norm{T}{}):T\in\arch{M}\}$ together with identity maps forms a direct system of seminormed algebras.
\end{lemma}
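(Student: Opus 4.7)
The plan is to equip the index set $\arch{M}$ with a natural preorder, verify that this preorder is directed, and then confirm that the family of identity maps furnishes coherent connecting morphisms of seminormed algebras.

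First I would order $\arch{M}$ by set-theoretic inclusion: declare $T_1 \preceq T_2$ iff $T_1 \subseteq T_2$. Whenever $T_1 \preceq T_2$, Lemma \ref{ds-lemma}(2) asserts that the identity map $\map{\iota_{T_1,T_2}}{(A,\norm{T_1}{})}{(A,\norm{T_2}{})}$ is continuous, and it is trivially an $\reals$-algebra homomorphism since it is the identity on $A$. The compatibility axioms of a direct system, namely $\iota_{T,T}=\mathrm{id}$ and $\iota_{T_2,T_3}\circ\iota_{T_1,T_2}=\iota_{T_1,T_3}$ whenever $T_1\subseteq T_2\subseteq T_3$, are then automatic, because all connecting maps are literally the identity set-map on $A$.

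The substantive step is directedness: given $T_1,T_2\in\arch{M}$, I need some $T_3\in\arch{M}$ containing both. I would take $T_3:=T_1+T_2=\{t_1+t_2:t_i\in T_i\}$. Routine verifications show $T_3$ is a quadratic module: closure under addition and under $\reals_{\ge0}$-scaling is immediate, $1=1+0\in T_3$, and for $\sigma\in\sum A^2$ one has $\sigma(t_1+t_2)=\sigma t_1+\sigma t_2\in T_1+T_2$. Since $M\subseteq T_1\subseteq T_3$, the module $T_3$ contains $M$, and for any $a\in A$ the Archimedean property of $T_1$ alone yields $r\in\reals$ with $r\pm a\in T_1\subseteq T_3$, so $T_3$ is Archimedean. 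Hence $T_3\in\arch{M}$, as needed.

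There is no genuine obstacle here; the statement is bookkeeping, reducing to Lemma \ref{ds-lemma}(2) combined with the elementary observation that the pointwise sum of two Archimedean quadratic modules containing $M$ is again an Archimedean quadratic module containing $M$. Note that part (1) of Lemma \ref{ds-lemma} plays no role in this proof; it presumably surfaces later when the colimit topology $\T{M}$ is constructed and its properties analyzed.
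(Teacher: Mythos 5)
The step where your proof diverges from the paper's is the directedness argument, and the divergence is a genuine problem rather than a stylistic choice. The paper takes $T_3=T_1\cap T_2$: by Lemma \ref{ds-lemma}(1) the intersection is again Archimedean, it contains $M$ and stays proper, and Lemma \ref{ds-lemma}(2) makes the identity maps $(A,\norm{T_1\cap T_2}{})\to(A,\norm{T_i}{})$ continuous, i.e.\ $\norm{T_1\cap T_2}{}\ge\max(\norm{T_1}{},\norm{T_2}{})$. The whole point of the lemma is that the family $\{\norm{T}{}\}_{T\in\arch{M}}$ is directed \emph{upward}: any two of its seminorms are dominated by a third member of the family. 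This is exactly what the definition of $\T{M}$ (the weakest topology making every $\map{\iota}{A}{(A,\norm{T}{})}$ continuous) and the computation $\cl{\T{M}}{C}=\bigcap_{T\in\arch{M}}\cl{\norm{T}{}}{C}$ in the final theorem consume. So part (1) of Lemma \ref{ds-lemma}, which you explicitly set aside as playing no role, is the substantive content of the proof. Your choice $T_3=T_1+T_2$ directs the family the wrong way: since $T_i\subseteq T_1+T_2$, one gets $\norm{T_1+T_2}{}\le\min(\norm{T_1}{},\norm{T_2}{})$, which provides no common dominating seminorm and cannot support the closure computation that follows.

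Independently of the orientation issue, your directedness step has a concrete gap: the sum of two proper Archimedean quadratic modules containing $M$ need not be proper, and on an improper module the quantity $\inf\{r\in\reals:r\pm a\in T\}$ is $-\infty$, so $\norm{T_1+T_2}{}$ need not be a seminorm at all (Proposition \ref{non-neg} relies on $\K{T}\neq\emptyset$, which forces $-1\notin T$). For instance, in $A=\reals\times\reals$ take $M=\reals_{\ge0}\times\reals_{\ge0}$, $T_1=\reals_{\ge0}\times\reals$ and $T_2=\reals\times\reals_{\ge0}$; all three are Archimedean quadratic modules with $M\subseteq T_1\cap T_2$, yet $-1=(0,-1)+(-1,0)\in T_1+T_2$, whence $T_1+T_2=A$. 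The intersection never suffers this defect, since $T_1\cap T_2\subseteq T_1$ remains proper.
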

\begin{proof}
The set $\arch{M}$ is partially ordered by inclusion. Take $T_1,T_2\in\arch{M}$, the quadratic module $T_3=T_1\cap T_2$ contained 
in both $T_1$ and $T_2$ and also belongs to $\arch{M}$. The inclusion maps $\map{\iota_1}{(A,\norm{T_3}{})}{(A,\norm{T_1}{})}$
and $\map{\iota_2}{(A,\norm{T_3}{})}{(A,\norm{T_2}{})}$ are continuous by lemma \ref{ds-lemma}. 
So $\{(A,\norm{T}{}):T\in\arch{M}\}$ together with inclusion maps is a direct system.
\end{proof}
The weakest topology on $A$ such that all maps $\map{\iota}{A}{(A,\norm{T}{})}$, $T\in\arch{M}$ are continuous, coincides with the 
direct limit topology of $\{(A,\norm{T}{}):T\in\arch{M}\}$ on $A$. We denote this topology with $\T{M}$. In symbols:
\[
	(A,\T{M})=\varinjlim\limits_{T\in\arch{M}}(A,\norm{T}{}).
\]
\begin{thm}
Let $M$ be a quadratic module and $C$ a cone in $A$ such that $\K{M}\cap\K{C}\neq\emptyset$. Then $\cl{\T{M}}{C}=\Psd{}{\K{M}\cap\K{C}}$.
\end{thm}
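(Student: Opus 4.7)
The plan is first to unpack $\T{M}$ concretely. Since $\T{M}$ is the weakest topology on $A$ making every identity $\iota:A\to(A,\norm{T}{})$ continuous for $T\in\arch{M}$, it is generated by the sub-basis of all $\norm{T}{}$-open sets. If $V_1,\dots,V_n$ are $\norm{T_1}{},\dots,\norm{T_n}{}$-open respectively, set $T:=T_1\cap\cdots\cap T_n$; by lemma \ref{ds-lemma}(1) $T\in\arch{M}$, and since $T\subseteq T_i$ one has $\norm{T_i}{a}\leq\norm{T}{a}$ for all $a$, so each $V_i$ is $\norm{T}{}$-open. Consequently every basic $\T{M}$-open set is $\norm{T}{}$-open for some single $T\in\arch{M}$.

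For the inclusion $\cl{\T{M}}{C}\subseteq\Psd{}{\K{M}\cap\K{C}}$ I would show that every $\alpha\in\K{M}$ is $\T{M}$-continuous. Given such $\alpha$, the Archimedean quadratic module $M_\alpha:=\{a\in A:\hat{a}(\alpha)\geq0\}$ introduced just before the preceding lemma contains $M$, so $M_\alpha\in\arch{M}$. Because $\alpha$ is non-negative on $M_\alpha$, theorem \ref{psd-cnt} gives $\norm{M_\alpha}{}$-continuity of $\alpha$, which together with the identification above upgrades to $\T{M}$-continuity. Now
\[
\Psd{}{\K{M}\cap\K{C}}=\bigcap_{\alpha\in\K{M}\cap\K{C}}\alpha^{-1}(\reals_{\geq0})
\]
is an intersection of $\T{M}$-closed half-spaces that trivially contains $C$, so the inclusion follows.

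For the reverse inclusion, take $a\notin\cl{\T{M}}{C}$. By the first paragraph there exist $T_0\in\arch{M}$ and a $\norm{T_0}{}$-open neighbourhood $V$ of $a$ disjoint from $C$. To invoke theorem \ref{sn-cls} one needs $\K{T_0}\cap\K{C}\neq\emptyset$, which in general may fail. Fix any $\alpha_0\in\K{M}\cap\K{C}$ (non-empty by assumption) and set $T:=T_0\cap M_{\alpha_0}$. Then $T\in\arch{M}$ by lemma \ref{ds-lemma}(1), the inclusion $T\subseteq T_0$ forces $V$ to remain $\norm{T}{}$-open, and $\alpha_0\in\K{M_{\alpha_0}}\subseteq\K{T}$ places $\alpha_0$ in $\K{T}\cap\K{C}$. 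Theorem \ref{sn-cls} now gives $\cl{\norm{T}{}}{C}=\Psd{}{\K{T}\cap\K{C}}$, so $a\notin\cl{\norm{T}{}}{C}$ produces some $\alpha\in\K{T}\cap\K{C}$ with $\alpha(a)<0$; since $M\subseteq T$ implies $\K{T}\subseteq\K{M}$, this $\alpha$ already lies in $\K{M}\cap\K{C}$ and witnesses $a\notin\Psd{}{\K{M}\cap\K{C}}$. The main obstacle is precisely this last patch: an arbitrary $T_0$ delivered by the separation step may have $\K{T_0}\cap\K{C}=\emptyset$ so theorem \ref{sn-cls} does not apply directly, and one must enlarge the seminorm by intersecting with an $M_{\alpha_0}$ to force a useful character into $\K{T}\cap\K{C}$ without losing the separation of $a$ from $C$.
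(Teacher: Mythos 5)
Your argument is correct and follows the same basic strategy as the paper: reduce the computation of $\cl{\T{M}}{C}$ to the seminorm topologies $\norm{T}{}$, $T\in\arch{M}$, and invoke Theorem \ref{sn-cls} together with the identity $\bigcup_{T\in\arch{M}}\K{T}=\K{M}$ (via the modules $M_\alpha$). The paper does this in one line, writing $\cl{\T{M}}{C}=\bigcap_{T\in\arch{M}}\cl{\norm{T}{}}{C}=\bigcap_{T\in\arch{M}}\Psd{}{\K{T}\cap\K{C}}$; the second equality applies Theorem \ref{sn-cls} to \emph{every} $T\in\arch{M}$, including those with $\K{T}\cap\K{C}=\emptyset$ (e.g.\ $T=M_\alpha$ for $\alpha\in\K{M}\setminus\K{C}$, where $\K{T}=\{\alpha\}$), for which the theorem's hypothesis fails. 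Your version is more careful on exactly this point: you justify the directedness of the family of seminorms (so that a basic $\T{M}$-neighbourhood is $\norm{T}{}$-open for a single $T$), and you patch the hypothesis failure by replacing $T_0$ with $T_0\cap M_{\alpha_0}$ for a fixed $\alpha_0\in\K{M}\cap\K{C}$, which keeps the separating neighbourhood open while forcing $\K{T}\cap\K{C}\neq\emptyset$. So you not only reproduce the paper's route but also close a small gap in its execution; the paper's computation buys brevity, while your argument buys a complete verification of the hypotheses actually being used.
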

\begin{proof}
Since $\T{M}=\varinjlim\limits_{T\in\arch{M}}(A,\norm{T}{})$, applying theorem \ref{sn-cls} we have
\[
\begin{array}{lcl}
	\cl{\T{M}}{C} & = & \bigcap_{T\in\arch{M}}\cl{\norm{T}{}}{C}\\
		& = & \bigcap_{T\in\arch{M}}\Psd{}{\K{T}\cap\K{C}}\\
		& = & \Psd{}{\bigcup_{T\in\arch{M}}\K{T}\cap\K{C}}\\
		& = & \Psd{}{\K{M}\cap\K{C}}.
\end{array}
\]
\end{proof}
%

%
\end{document}